\numberwithin{equation}{section}
\newtheorem{theorem}{Theorem}[section]
\newtheorem{cor}{Corollary}[section]
\newtheorem{lemma}{Lemma}[section]
\newtheorem{remark}{Remark}[section]
\newtheorem{obs}{Observation}[section]
\theoremstyle{definition}
\newtheorem{example}{Example}
\newcommand{\cA}{\mathcal{A}}
\newcommand{\cF}{\mathcal{F}}
\newcommand{\cN}{\mathcal{N}}
\newcommand{\cQ}{\mathcal{Q}}
\newcommand{\R}{\mathbb{R}}
\newcommand{\N}{\mathbb{N}}
\newcommand{\Rnn}{\R^{n \times n}}
\newcommand{\Rpn}{\R_{\geq 0}^n}
\newcommand{\Rpnn}{\R_{\geq 0}^{n \times n}}
\newcommand*{\E}{\mathbb{E}}
\newcommand{\supp}{\operatorname{supp}}
\newcommand{\aseq}{\overset{\text{a.s.}}{=}}
\newcommand{\bone}{\mathbf{1}}
\newcommand{\zero}{\mathbf{0}}
\newcommand*{\eps}{\varepsilon}
\DeclareMathOperator*{\argmin}{argmin}
\renewcommand{\leq}{\leqslant}
\renewcommand{\geq}{\geqslant}
\DeclareMathOperator{\half}{\frac{1}{2}}
\providecommand{\abs}[1]{\left\lvert#1\right\rvert}
\providecommand{\norm}[1]{\lVert{#1}\rVert}
\providecommand{\normlr}[1]{\left\|{#1}\right\|}
\newcommand{\explyap}{R(\cA, p)}
\newcommand{\lyap}{\lambda(\cA, p)}
\newcommand{\jsr}{\rho(\cA)}
\newcommand{\MA}{M_{\cA}}
\newcommand{\lyapc}{\lambda(c \cA, p)}
\newcommand{\lyaphat}{\lambda(\cA, \hat{p})}
\newcommand{\lyapmin}{\min_{p \in \Delta_n} \lambda(\cA, p)}
\newcommand{\lyapmax}{\max_{p \in \Delta_n} \lambda(\cA, p)}
\newcommand{\UnifSd}{\mathsf{Unif}(S^{d-1})}
\newcommand{\UnifStwo}{\mathsf{Unif}(S^{2})}
\newcommand{\musym}{\mu_{\text{sym}}}
\newcommand{\muasym}{\mu_{\text{asym}}}
\newcommand{\lyapmusym}{\lambda(\cA, \musym)}
\newcommand{\lyapmuasym}{\lambda(\cA, \muasym)}
\newcommand{\Pclass}{\textsc{P}}
\newcommand{\NPclass}{\textsc{NP}}
\def\blfootnote{\gdef\@thefnmark{}\@footnotetext}
\begin{document}

\title{Lyapunov Exponent of Rank One Matrices: Ergodic Formula and Inapproximability of the Optimal Distribution}
\author{Jason M. Altschuler \and Pablo A. Parrilo}
\date{}
\maketitle

\blfootnote{The authors are with the Laboratory for Information and Decision Systems (LIDS), Massachusetts Institute of Technology, Cambridge MA 02139. Work partially supported by NSF AF 1565235 and NSF Graduate Research Fellowship 1122374.} 

\begin{abstract} 

The \emph{Lyapunov exponent} corresponding to a set of
  square matrices $\cA = \{A_1, \dots, A_n \}$ and a probability
  distribution $p$ over $\{1, \dots, n\}$ is $\lyap := \lim_{k \to
    \infty} \frac{1}{k} \,\E \log \norm{A_{\sigma_k} \cdots
    A_{\sigma_2}A_{\sigma_1}}$, where $\sigma_i$ are i.i.d. according
  to $p$. This quantity is of fundamental importance to control theory
  since it determines the asymptotic convergence rate $e^{\lyap}$ of the
  stochastic linear dynamical system $x_{k+1} = A_{\sigma_k}
  x_k$. This paper investigates the following ``design problem'': given
  $\cA$, compute the distribution $p$ minimizing $\lyap$. Our main
  result is that it is $\NPclass$-hard to decide whether there exists
  a distribution $p$ for which $\lyap < 0$, i.e. it is $\NPclass$-hard
  to decide whether this dynamical system can be stabilized.


This hardness result holds even in the ``simple'' case where $\cA$ contains only rank-one matrices. Somewhat surprisingly, this is in stark contrast to the Joint Spectral Radius -- the deterministic kindred of the Lyapunov exponent -- for which the analogous optimization problem for rank-one matrices is known to be exactly computable in polynomial time.



To prove this hardness result, we first observe via Birkhoff's
Ergodic Theorem that the Lyapunov exponent of rank-one matrices admits
a simple formula and in fact is a quadratic form in~$p$. Hardness of
the design problem is shown through a reduction from the Independent
Set problem. Along the way, simple examples are given illustrating
that $p \mapsto \lyap$ is neither convex nor concave in general. We conclude with extensions to continuous distributions, exchangeable processes, Markov processes, and stationary ergodic processes.
\end{abstract}


\section{Introduction}\label{sec:intro}
Given a finite collection of square matrices $\cA = \{A_1, \dots, A_n \}$
 and a probability distribution $p$ over $\{1, \dots, n\}$, the corresponding \emph{Lyapunov exponent} $\lyap$
is defined as:
\begin{align}
\lyap := \lim_{k \to \infty} \frac{1}{k} \,\E \log \norm{A_{\sigma_k} \cdots A_{\sigma_2}A_{\sigma_1} },
\label{eq:def:lyap}
\end{align}
where the $\sigma_i$ are independently and identically distributed (i.i.d.) according to $p$. 
Over the past half century, this quantity $\lyap$ has received significant interest due to its many connections and applications to diverse fields including probability, ergodic theory, dynamical systems, functional analysis, representation theory, computer image generation of fractals, control theory, and more; see e.g.\  the surveys~\citep{CohKesNew86,Cohen98,DiaFre99,Furman02,Viana14,BenQui16} and references within. 
\par The primary application of the Lyapunov exponent that this paper focuses on is in control theory. The fundamental connection is that $\lyap$ dictates the asymptotic growth rate of the stochastic linear dynamical system
\begin{align}
x_{k+1} = A_{\sigma_k} x_k.
\label{eq:lds}
\end{align}
Indeed, by the celebrated Furstenberg-Kesten Theorem~\citep{FurKes60}, the average norm $\lim_{k \to \infty} \|A_{\sigma_k}\cdots A_{\sigma_2}A_{\sigma_1}\|^{1/k}$ almost surely (a.s.) exists and is equal to the \emph{Lyapunov spectral radius}
\begin{align}
\explyap := e^{\lyap}.
\label{eq:def:explyap}
\end{align}
In particular, the dynamical system~\eqref{eq:lds} converges asymptotically\footnote{
The Lyapunov exponent characterizes a.s.\ convergence of~\eqref{eq:lds}. In general, this is different from convergence in \emph{mean}, i.e., $\lim_{k \to \infty} \E\|A_{\sigma_k} \cdots A_{\sigma_1} \| = 0$. For example, consider the scalar case with $n=2$, where $A_{\sigma_k}$ are i.i.d., and are either $2$ or $\tfrac{1}{100}$ with probability $\half$. Then $A_{\sigma_k}\cdots A_{\sigma_1}$ does not converge in mean (in fact $\E|A_{\sigma_k} \cdots A_{\sigma_1}|$ grows exponentially in $k$) due to a few exponentially large but exponentially unlikely trajectories, but \emph{does} converge to $0$ with probability $1$ (since $\exp(\E \log |A_{\sigma_1}| ) < 1$).} to zero (a.s.) if and only if $\explyap < 1$, i.e. if and only if $\lyap < 0$. Note that in this sense, the Lyapunov spectral radius is a natural stochastic generalization of the spectral radius of a single matrix. Two natural control-theoretic questions arise about the dynamical system~\eqref{eq:lds}: analysis and design.
\par The \emph{analysis problem} is to compute the convergence rate $\lyap$ of the dynamical system given (fixed) parameters $(\cA,p)$. This problem is well-known to be difficult, and approximating $\lyap$ is in general algorithmically undecidable~\citep[Theorem 2]{TsiBlo97}.
Nevertheless, the significance of this problem has motivated a substantial body of research on algorithms for obtaining upper or lower bounds on $\lyap$, and on identifying special cases in which this problem becomes tractable; see e.g.~\citep{DerrMecPic87, DeuPal89, Key90, Mainieri92, DieVan02, Protasov04, GhaAna05, DieVan06, JunProtBlo09, BeyLus09, ProJun13}. 
\par The \emph{design problem} -- which is the focus of this paper -- is to optimize the dynamical system so that the convergence is as fast as possible, namely: given a fixed set of matrices $\cA$, compute a distribution $p$ minimizing $\lyap$. In contrast to the analysis question, there seems to be little previous work on this design problem. (A related version of this design problem uses instead the more restrictive notion of convergence in \emph{mean}, see e.g.~\citep{Boyd06}.)
One of this paper's primary contributions is to explain this lack of progress on the design problem by showing that it is $\NPclass$-hard to determine whether there exists a distribution $p$ such that $\lyap < 0$. That is, unless $\Pclass = \NPclass$, there is no polynomial time algorithm for deciding whether the dynamical system~\eqref{eq:lds} has a stabilizing distribution $p$. Moreover, this complexity result holds even in the ``simple'' case where all matrices in $\cA$ are rank one.

\subsection{Contributions and outline}
This paper focuses on the case when all matrices in $\cA$ are \emph{rank one}. The advantage of restricting to this simple case is that (i) the analysis problem becomes tractable as $\lyap$ admits a simple, closed-form expression; and (ii) this formula enables us to prove that the design problem is hard in a complexity-theoretic sense. We elaborate below on details and implications. Table~\ref{tab:overview} contains an overview of our main results. 
\par Section~\ref{sec:formula} studies the analysis problem when $\cA$ is a collection of rank-one matrices. We observe in Theorem~\ref{thm:formula} that in this special case, $\lyap$ admits a simple closed-form expression and is in fact a \emph{quadratic form} in $p$.
This is an ergodic-type result since it equates $\lyap$ -- which is a ``time average'' of the product of an infinite sequence of random matrices -- to a simpler ``space average.'' We note that this formula readily extends to ergodic stationary processes taking values in sets $\cA$ consisting of infinitely many rank-one matrices; however, for expository reasons we defer this generalization to Section~\ref{sec:extensions}, and provide in Section~\ref{sec:formula} just the simple case of i.i.d. processes taking values in finite sets $\cA$, since this admits an elementary proof and requires essentially no measure theory. We remark that this formula shows that for this rank-one case, $\lyap$ is computable in polynomial time (as opposed to being undecidably hard to approximate in the general case~\citep{TsiBlo97}).

\begin{table}[t!]
	\begin{tabular}{|c|c|c|c|c|}
		\hline
		& \textbf{Analysis question}                                                                                   & \textbf{\begin{tabular}[c]{@{}c@{}}Design question\\ (i.i.d.)\end{tabular}} & \textbf{\begin{tabular}[c]{@{}c@{}}Design question\\ (exchangeable)\end{tabular}} & \textbf{\begin{tabular}[c]{@{}c@{}}Design question\\ (Markov)\end{tabular}}                 \\ \hline
		\textbf{Rank one} & \textit{\begin{tabular}[c]{@{}c@{}}Poly-time algorithm\\ (Theorem 2.1)\end{tabular}}                  & \textit{\begin{tabular}[c]{@{}c@{}}NP-hard\\ (Theorem 4.1)\end{tabular}}    & \textit{\begin{tabular}[c]{@{}c@{}}NP-hard\\ (Theorem 5.2)\end{tabular}}          & \textit{\begin{tabular}[c]{@{}c@{}}Poly-time algorithm\\ (Theorem 5.3)\end{tabular}} \\ \hline
		\textbf{General}  & \begin{tabular}[c]{@{}c@{}}Undecidable\\ ~\citep[Theorem 2]{TsiBlo97} \end{tabular} & \textit{\begin{tabular}[c]{@{}c@{}}NP-hard\\ (Theorem 4.1)\end{tabular}}    & \textit{\begin{tabular}[c]{@{}c@{}}NP-hard\\ (Theorem 5.2)\end{tabular}}          & ?                                                                                           \\ \hline
	\end{tabular}
	\caption{Complexity overview of the analysis and design questions for the Lyapunov exponent -- for collections of general matrices, as well as the special case of rank-one matrices. The various versions of the design problem are for optimizing over different classes of stochastic processes. Our new results are shown in italics.}
	\label{tab:overview}
\end{table}

\par Sections~\ref{sec:convexity} and~\ref{sec:inapprox} then turn to the design problem. Specifically, Section~\ref{sec:convexity} investigates the convexity and concavity of the function $p \mapsto \lyap$ over the probability simplex $\Delta_n$. By our aforementioned result that $\lyap$ is a quadratic form in this rank-one case, these convexity/concavity properties are equivalent to the (conditional) definiteness of the Hessian $\tfrac{\partial^2}{\partial p^2} \lyap$. We give simple, small examples illustrating that in general this matrix is neither (conditionally) positive semidefinite nor negative semidefinite, and then explain this phenomenon by making a connection to the fact that the Martin distance on the $(1,n)$ Grassmanian is not a metric.
\par Of course, this lack of convexity of the function $p \mapsto \lyap$ does not in itself prove that the design problem $\min_{p \in \Delta_n} \lyap$ is hard. Section~\ref{sec:inapprox} formally shows that it is $\NPclass$-hard to even determine the sign of $\lyapmin$. That is, unless $\Pclass = \NPclass$, there is no polynomial-time algorithm for determining whether the dynamical system~\eqref{eq:lds} has a stabilizing distribution $p$. 
This explains the lack of prior algorithmic progress on the design problem. The key idea in the proof is a reduction from the Independent Set problem, which is known to be $\NPclass$-hard to approximate and is also known to be expressible as the minimization of a certain quadratic form over the probability simplex. The intuition for this reduction is, roughly speaking, that the optimal distribution for $\lyap$ should concentrate mass on a set of matrices in $\cA$ which are as mutually ``orthogonal'' to each other as possible, and such a set of matrices can be thought of as an independent set in a certain associated graph. (See the proof of Theorem~\ref{thm:dec} for precise details.)
\par These hardness results uncover an interesting and somewhat surprising (at least to the authors) dichotomy between the complexity of optimizing the Lyapunov exponent and its deterministic analogue, the \emph{Joint Spectral Radius} (JSR). Recall that the JSR of a collection of square matrices $\cA = \{A_1, \dots, A_n \}$ is defined as:
\begin{align}
\jsr := \lim_{k \to \infty} \max_{\sigma \in \{1,\dots,n\}^k}  \norm{A_{\sigma_k} \cdots A_{\sigma_2}A_{\sigma_1} }^{1/k}.
\label{eq:def:jsr}
\end{align}
The JSR is a natural generalization of the spectral radius of a single matrix and characterizes the maximal (i.e., worst-case) asymptotic growth rate of the linear dynamical system~\eqref{eq:lds} over all possible choices of $A_{\sigma_k}$. Due to its many applications (see e.g.~\citep{Jungers09} and the references within), the JSR has also attracted a large body of research. Similarly to the Lyapunov exponent, the JSR is known to in general have algorithmic undecidability and $\NPclass$-hardness complexity barriers~\citep{TsiBlo97, BloTsi00}, yet nevertheless there has been significant work on approximation algorithms for $\jsr$ and on identifying special cases in which the problem becomes more tractable~\citep{ParJad08, ProJunBlo10, GugPro13, AhmJunParRoo14, Jungers09}. In particular, it is known that (optimizing) the JSR of rank-one matrices is equivalent to a certain discrete optimization problem over an associated graph, namely the Maximum-Cycle-Mean problem, which can be solved in polynomial time~\citep{GurSam05,AhmPar12,LiuXia13,Karp78}. This is in stark contrast to our result that the Lyapunov exponent of rank-one matrices is $\NPclass$-hard to optimize, even approximately. 
\par In Subsection~\ref{subsec:ptas}, we note that in the special case when the set $\cA$ is ``well-conditioned,'' there is a polynomial-time approximation algorithm for $\lyapmin$.
\par Finally, Section~\ref{sec:extensions} considers extensions to more general stochastic processes $A_{\sigma_1}$, $A_{\sigma_2}$, $\dots$. The corresponding analysis and design problems are discussed in Subsection~\ref{subsec:extensions:stoc} and~\ref{subsec:extensions:des}, respectively. Specifically, Subsection~\ref{subsec:extensions:stoc} shows that a similar formula holds for the ``Lyapunov exponent'' of ergodic stationary processes of rank-one matrices. This allows for the study of more exotic stochastic linear dynamical systems~\eqref{eq:lds}. Using this formula, we compute the convergence rate for setups including i.i.d. processes over continuous distributions, exchangeable processes, and Markov processes. As an example application, we explicitly calculate the Lyapunov exponent for the induced distribution over matrices $uu^T$, where $u$ is distributed according to the $d$-dimensional spherical measure. Subsection~\ref{subsec:extensions:des} then turns to the design problem for these more general setups. We show that optimizing the convergence rate of the dynamical system~\eqref{eq:lds} is $\NPclass$-hard over exchangeable processes, but can be done in polynomial time for Markov processes. The intuition for these results is as follows. First, we observe that (after some manipulation) these problems seek to optimize a linear functional over a convex set, and thus are equivalent to optimizing over the extreme points. Informally speaking, the extreme points of the sets of exchangeable processes and Markov processes, respectively, are the sets of i.i.d. process and deterministic processes. The arguments are then finished by recalling that optimizing the convergence rate over i.i.d. processes is $\NPclass$-hard (discussed above), while optimizing over deterministic processes is known to be polynomial-time computable.

\subsection{Notation}
The set of discrete probability distributions on $n$ atoms is identified with the simplex $\Delta_n := \{p \in \Rpn : \sum_{i=1}^n p_i = 1 \}$. The support of a discrete probability distribution $p \in \Delta_n$ is denoted by $\supp(p) := \{i \in [n] : p_i > 0 \}$. We write $X \sim p$ to denote that $X$ is a random variable with distribution $p$. The symbols $e_i$, $\bone$, and $I$ denote the $i$th standard basis vector, all-ones vector, and identity matrix, respectively, in an ambient dimension that will be clear from context. The norm $\|\cdot\|_2$ denotes the Euclidean norm when applied to vectors, and denotes the spectral norm when applied to matrices.
All logarithms are natural logarithms, i.e., taken base $e$. We abbreviate ``almost surely'' by a.s., ``without loss of generality'' by w.l.o.g., ``independent and identically distributed'' by i.i.d., ``positive semidefinite'' by PSD, and ``negative semidefinite'' by NSD. Recall that a square matrix $M$ is said to be conditionally PSD (resp. conditionally NSD) if $p^TMp \geq 0$ (resp. $\leq 0$) for all vectors $p$ in the orthogonal component of the subspace spanned by $\bone$. All other specific notations are introduced in the main text.

\section{Ergodic formula for Lyapunov exponent of rank one matrices}\label{sec:formula}

This section gives an explicit closed-form expression for the Lyapunov exponent of a collection of rank-one matrices. This is an ergodic-type result in the sense that it equates the Lyapunov exponent -- which is a ``time average'' -- to a simpler ``space average.''
\par This result readily extends to stationary ergodic processes where the collection $\cA$ contains infinitely many matrices (Theorem~\ref{thm:stoc}). However for simplicity of exposition, we first present the discrete case (Theorem~\ref{thm:formula}) since it admits an elementary proof with essentially no measure theory.

\begin{theorem}\label{thm:formula}
For any collection of rank-one square matrices $\cA := \{A_i = u_iv_i^T \}_{i=1}^n$ and any probability distribution $p \in \Delta_n$,
\begin{align}
\lyap = \sum_{i,j=1}^n p_i p_j \log \abs{u_i^Tv_j}.
\label{eq:thm:formula}
\end{align}
\end{theorem}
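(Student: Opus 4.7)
The proof will exploit the telescoping structure that products of rank-one matrices enjoy. Writing $A_i = u_i v_i^T$, any length-$k$ product satisfies
\begin{equation*}
A_{\sigma_k}\cdots A_{\sigma_1} \;=\; u_{\sigma_k}\!\left(\prod_{j=1}^{k-1} v_{\sigma_{j+1}}^T u_{\sigma_j}\right)\! v_{\sigma_1}^T,
\end{equation*}
since each interior factor $v_{\sigma_{j+1}}^T u_{\sigma_j}$ is a scalar that can be pulled out. The matrix that remains, $u_{\sigma_k} v_{\sigma_1}^T$, is itself rank one, and its spectral norm is simply $\|u_{\sigma_k}\|_2 \|v_{\sigma_1}\|_2$. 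So taking norms and then logarithms gives
\begin{equation*}
\log \|A_{\sigma_k}\cdots A_{\sigma_1}\| \;=\; \sum_{j=1}^{k-1} \log\bigl|v_{\sigma_{j+1}}^T u_{\sigma_j}\bigr| \;+\; \log\|u_{\sigma_k}\|_2 \;+\; \log\|v_{\sigma_1}\|_2.
\end{equation*}

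The plan from here is to divide by $k$, take expectations, and pass to the limit. The two ``boundary'' terms $\log\|u_{\sigma_k}\|_2$ and $\log\|v_{\sigma_1}\|_2$ are uniformly bounded in $k$ (their values range over the finite sets $\{\log\|u_i\|_2\}$ and $\{\log\|v_i\|_2\}$), so after dividing by $k$ they contribute $O(1/k)$ and vanish in the limit. For the main sum, linearity of expectation plus the i.i.d. assumption gives, for each $j$,
\begin{equation*}
\E \log\bigl|v_{\sigma_{j+1}}^T u_{\sigma_j}\bigr| \;=\; \sum_{i,j=1}^n p_i p_j \log\bigl|u_i^T v_j\bigr|,
\end{equation*}
so the sum of $k-1$ such identical terms divided by $k$ tends to exactly the right-hand side of~\eqref{eq:thm:formula}.

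The one subtlety I would flag explicitly is the degenerate case in which some $u_i^T v_j = 0$ with $p_i p_j > 0$. Then with probability bounded below independently of $k$, at least one scalar factor in the telescoped product vanishes once $k$ is large enough, so $\log\|A_{\sigma_k}\cdots A_{\sigma_1}\|$ equals $-\infty$ with positive probability and the expectation is $-\infty$; the right-hand side of~\eqref{eq:thm:formula} also contains a $-\infty$ summand with positive weight, so both sides agree under the standard convention $0 \cdot \infty$-free extended-real arithmetic used for the Lyapunov exponent. I expect this degenerate case to be the only technically delicate point; the non-degenerate case is a straightforward three-line computation from the telescoping identity above, so the work is entirely in setting things up cleanly rather than in any real analytic difficulty.
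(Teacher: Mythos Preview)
Your proposal is correct and shares with the paper the same core telescoping identity for the product of rank-one matrices and the same treatment of the two boundary terms. The one genuine difference is in how the main sum is handled. You take expectations directly: by linearity and the i.i.d.\ assumption, $\E\bigl[\tfrac{1}{k}\sum_{t=1}^{k-1}\log|v_{\sigma_{t+1}}^Tu_{\sigma_t}|\bigr] = \tfrac{k-1}{k}\sum_{i,j}p_ip_j\log|u_i^Tv_j|$ for every $k$, and the limit is immediate. The paper instead interprets the time-average $\tfrac{1}{k-1}\sum_{t=1}^{k-1}\log|u_{\sigma_t}^Tv_{\sigma_{t+1}}|$ as the average edge weight along a trajectory of an $n$-state Markov chain with stationary distribution $p$, invokes an ergodic theorem to obtain \emph{almost-sure} convergence to $\sum_{i,j}p_ip_j\log|u_i^Tv_j|$, and then applies dominated convergence to pass to expectations. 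Your route is shorter and entirely elementary for the theorem as stated; the paper's route proves the stronger a.s.\ statement along the way, which it later reuses when generalizing to ergodic stationary processes via Birkhoff's theorem. One cosmetic point: in your displayed expectation you reuse $j$ as both the time index and a summation index; pick a fresh letter.
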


\begin{remark}\label{rem:skew-symm}
	The formula in Theorem~\ref{thm:formula} is independent of the rank-one decompositions $A_i = u_iv_i^T$. Indeed, a rescaling of $u_i$ and $v_i$ changes the cost matrix $[\log |u_i^Tv_j|]_{ij}$ by a skew-symmetric matrix, which does not affect the underlying quadratic form $\sum_{ij} p_i p_j \log \abs{u_i^Tv_j}$.
\end{remark}

It is important to emphasize that, in general, the Lyapunov exponent of a set of matrices \emph{cannot} be computed in closed form. It is well-known that (under mild conditions) the Lyapunov exponent is equal to a simpler ``space average'' over a certain invariant distribution~\citep{Furstenberg63}; however, the complexity of such ergodic formulas lies in the invariant distribution, which is often difficult -- or impossible -- to characterize or manipulate analytically. The key difference for the rank-one setting studied here is that in this special case, the invariant distribution has a simple closed form. In words, this is because in this rank-one case, the contraction of the next matrix $A_{\sigma_{k+1}}$ depends on the previous matrices essentially only through the last matrix $A_{\sigma_k}$.

\begin{proof}[Proof of Theorem~\ref{thm:formula}]
Note that the definition of $\lyap$ is independent of the norm $\|\cdot\|$; for convenience of computation, let us take it to be the spectral norm. Let $\sigma_1, \sigma_2, \dots$ be i.i.d. according to $p$, and expand the (random) quantity
\begin{align}
\frac{1}{k} \log \norm{A_{\sigma_k} \cdots A_{\sigma_2} A_{\sigma_1} }_2
&=
\frac{1}{k} \log \norm{(u_{\sigma_k}v_{\sigma_k}^T) \cdots (u_{\sigma_2}v_{\sigma_2}^T) (u_{\sigma_1}v_{\sigma_1}^T) }_2
\nonumber
\\ &= \frac{1}{k} \sum_{t=1}^{k-1} \log \abs{u_{\sigma_t}^Tv_{\sigma_{t+1}}}
+ \frac{1}{k} \left(\log \|u_{\sigma_k}\|_2 + \log \|v_{\sigma_1}\|_2\right).
\label{eq:pf-thm-formula:expand}
\end{align}
We may assume w.l.o.g. that $A_i \neq 0$ for all $i \in \supp(p)$, since otherwise the proof is immediate as both sides of~\eqref{eq:thm:formula} equal $-\infty$. Thus the latter term $\frac{1}{k} (\log \|u_{\sigma_k}\|_2 + \log \|v_{\sigma_1}\|_2)$ in~\eqref{eq:pf-thm-formula:expand} converges to $0$ as $k \to \infty$. 
\par Now let $G_{\cA}$ be the complete directed graph on $n$ nodes, one for each matrix $A_i$, with weight $\log |u_i^Tv_j|$ on the edge from node $i$ to node $j$. Consider the $n$-state Markov chain on $G_{\cA}$ with transition probability $p_j$ from node $i$ to node $j$, irrespective of $i$. The key point is that after multiplying by $\tfrac{k}{k-1}$, the sum $\frac{1}{k-1} \sum_{t=1}^{k-1} \log |u_{\sigma_t}^Tv_{\sigma_{t+1}}|$ in~\eqref{eq:pf-thm-formula:expand} is precisely the average weight of edges travelled when running this Markov chain for $k-1$ steps from initial state $\sigma_1 \sim p$. Now since this Markov chain has stationary distribution $p$, a standard ergodic result on discrete-time, finite-state Markov chains (see e.g.~\citep{GriSti01}) shows that this quantity converges a.s. to $\sum_{ij} p_i p_j \log |u_i^Tv_j|$. Thus by~\eqref{eq:pf-thm-formula:expand} and the fact $\lim_{k \to \infty} \tfrac{k-1}{k} = 1$, we conclude that
\begin{align}
\lim_{k \to \infty} \frac{1}{k} \log \norm{A_{\sigma_k} \cdots A_{\sigma_2} A_{\sigma_1} } \aseq
\sum_{ij} p_i p_j \log |u_i^Tv_j|.
\label{eq:pf-thm-formula:as}
\end{align}
By sub-multiplicativity of the spectral norm, $ |\frac{1}{k}\log \norm{A_{\sigma_k} \cdots A_{\sigma_2} A_{\sigma_1} }_2| \leq \max_{i \in \supp(p)}|\log \|A_i\|_2|$ is uniformly bounded for all $k \in \N$. Thus the claimed formula for $\lyap$ follows from the a.s. convergence proven in~\eqref{eq:pf-thm-formula:as} and an application of Lebesgue's Dominated Convergence Theorem. 
\end{proof}

\begin{remark}\label{rem:product-two}
Theorem~\ref{thm:formula} shows that the Lyapunov exponent of rank-one matrices depends only on the products $A_iA_j$ of length \emph{two}. This is in contrast to the Joint Spectral Radius of rank-one matrices, which depends on all matrix products of length up to $n$~\citep[Corollary 2.1]{AhmPar12}.
\end{remark}

\begin{remark}\label{rem:computable}
Theorem~\ref{thm:formula} shows that the Lyapunov exponent of rank-one matrices is not only computable, but also computable in polynomial time. This is in sharp contrast to the Lyapunov exponent of general matrices (i.e., not necessarily rank-one), which is \emph{undecidably hard} to approximate~\citep[Theorem 2]{TsiBlo97}.
\end{remark}

\begin{remark}
It is an interesting open question whether such results extend to collections of matrices with some small fixed rank $k > 1$. In particular, is there a closed-form expression for the Lyapunov exponent, analogously to Theorem~\ref{thm:formula}? And if so, how do the phenomena mentioned in Remark~\ref{rem:product-two} and~\ref{rem:computable} change?
\end{remark}


\section{Convexity properties of the Lyapunov exponent as a function of the distribution}\label{sec:convexity}

From Theorem~\ref{thm:formula}, we know that $\lyap$ is a quadratic form in $p$:
\begin{align}
\lyap = p^T \MA p,
\label{eq:thm:formula-quad}
\end{align}
where $(\MA)_{ij} := \half (\log |u_i^Tv_j| + \log |u_j^Tv_i|)$.  This resolves the \emph{analysis problem} of computing the Lyapunov exponent given a collection of rank-one matrices $\cA$ and a probability distribution $p$. In this and the following section, we now turn to the \emph{design problem}: Given a collection of matrices $\cA$, find the distribution $p \in \Delta_n$ minimizing $\lyap$. As mentioned in the introduction, this is a fundamental problem in control theory with applications including e.g., fast convergence in the corresponding stochastic linear dynamical system~\eqref{eq:lds}. 
\par We begin with an example illustrating that even in simple cases, the optimal distribution can depend in a non-obvious way on the geometric configuration of the matrices in $\cA$.

\begin{figure}[t!]
  \begin{subfigure}[t]{.33\textwidth}
    \centering
 \begin{tikzpicture}
     \draw[<->] (-1,0)--(3,0);
     \draw[<->] (0,-1)--(0,3);
   \draw[line width=3pt,blue,-stealth](0,0)--(2,0.2) node[anchor=south west]{$\boldsymbol{u_2 = (10,1)}$};
   \draw[line width=3pt,blue,-stealth](0,0)--(0.2,2) node[anchor=south west]{$\boldsymbol{u_1 = (1,10)}$};
  \draw[line width=1pt,blue,-stealth](0,0)--(1.6,1.6) node[anchor=south west]{$\boldsymbol{u_3 = (8,8)}$};
 \end{tikzpicture}
 \caption{$p^* = (\half,\half,0)$, $\lambda(\cA, p^*) \approx 3.805$.}
 \label{fig:min:left}
  \end{subfigure}%
  \begin{subfigure}[t]{.33\textwidth}
    \centering
\begin{tikzpicture}
    \draw[<->] (-1,0)--(3,0);
    \draw[<->] (0,-1)--(0,3);
 \draw[line width=1pt,blue,-stealth](0,0)--(2,0.2) node[anchor=south west]{$\boldsymbol{u_2 = (10,1)}$};
  \draw[line width=1pt,blue,-stealth](0,0)--(0.2,2) node[anchor=south west]{$\boldsymbol{u_1 = (1,10)}$};
  \draw[line width=3pt,blue,-stealth](0,0)--(0.6,0.6) node[anchor=south west]{$\boldsymbol{u_3 = (3,3)}$};
\end{tikzpicture}
   \caption{$p^* = (0,0,1)$, $\lambda(\cA, p^*) \approx 2.890$.}
   \label{fig:min:mid}
  \end{subfigure}
   \begin{subfigure}[t]{.33\textwidth}
      \centering
 \begin{tikzpicture}
     \draw[<->] (-1,0)--(3,0);
     \draw[<->] (0,-1)--(0,3);
   \draw[line width=3pt,blue,-stealth](0,0)--(2,0.2) node[anchor=south west]{$\boldsymbol{u_2 = (10,1)}$};
   \draw[line width=3pt,blue,-stealth](0,0)--(0.2,2) node[anchor=south west]{$\boldsymbol{u_1 = (1,10)}$};
   \draw[line width=3pt,blue,-stealth](0,0)--(1.3,-0.7) node[anchor=south west]{$\boldsymbol{u_3 = (6.5,-3.5)}$};
 \end{tikzpicture}
      \caption{$p^* \approx (0.156,0.377,0.467)$,  $\lambda(\cA, p^*) \approx 3.772$.}
    \label{fig:min:right}
    \end{subfigure}
        \caption{Vectors $\{u_i\}_{i=1}^3$ for the three scenarios in Example~\ref{ex:min}. Bolded vectors correspond to the support of the optimal distribution $p^* := \argmin_{p \in \Delta_3} \lambda( \cA,p)$, where $\cA := \{u_iu_i^T \}_{i=1}^3$.}
        \label{fig:min}
\end{figure}
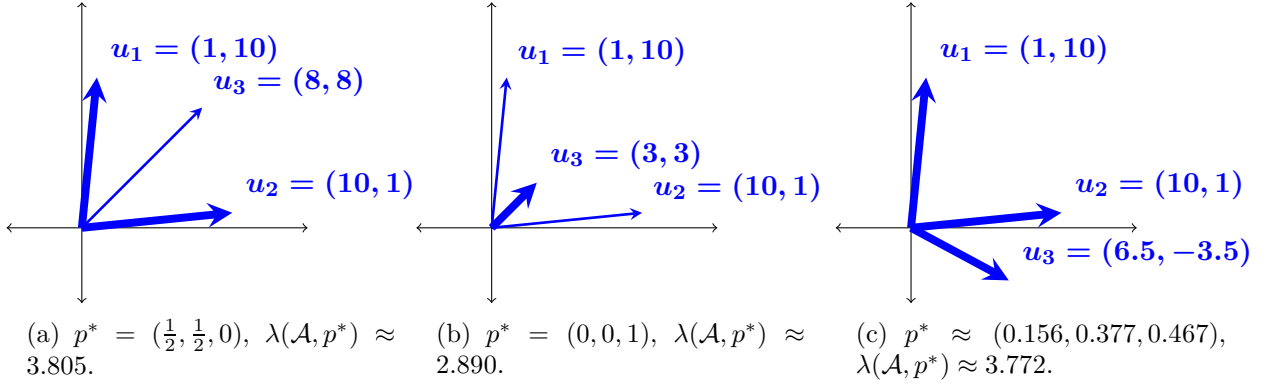

\begin{example}\label{ex:min}
We consider several cases of symmetric rank-one matrices $\cA = \{u_iu_i^T\}_{i=1}^3$ (plotted in Figure~\ref{fig:min}) to illustrate the design problem $\lyapmin$. This problem has a simple and intuitive geometric interpretation in terms of the linear dynamical system\footnote{This can be viewed e.g., as a randomized instance of Dykstra's alternating-projection method for finding a point at the intersection of convex sets (namely the lines $u_i$ here)~\citep{Dykstra83}.} $x_{k+1} = A_{\sigma_k}x_k = u_{\sigma_k}u_{\sigma_k}^T x_k$, which corresponds to projecting (and scaling) the current state vector $x_k$ onto a randomly chosen vector $u_{\sigma_k}$. Specifically, $\lyapmin$ corresponds to finding the distribution $p \in \Delta_3$ over lines $\{u_i\}_{i=1}^3$ such that repeatedly projecting (and scaling) onto a random vector $u_i$ with probability $p_i$ converges as quickly as possible. In all examples, $u_1 = [1,10]^T$ and $u_2 = [10,1]^T$ are close to orthogonal in that they have small inner product; this is why the optimal distribution is concentrated on them in Figure~\ref{fig:min:left}.
In Figure~\ref{fig:min:mid}, $u_3$ has smaller norm, and the optimal distribution is concentrated just on it. In Figure~\ref{fig:min:right}, the optimal distribution is supported on all three matrices.
Finally, we note that the optimal distribution for the \emph{maximization} problem $\lyapmax$ is easily characterized: it is always concentrated on the vector(s) of largest norm (see Lemma~\ref{lem:min-max} below).
\end{example}

In the rest of the section, we investigate the (lack of) convexity and concavity of the function $p \mapsto \lyap$ for a fixed collection of rank-one matrices $\cA = \{u_iv_i^T\}_{i=1}^n$. Algorithmic and complexity results for the design problem $\lyapmin$ are then discussed in the following section.
\par Note that since $p \in \Delta_n$ is a probability distribution in our context, the function $\Delta_n \ni p \mapsto \lyap = p^T\MA p$ is convex (resp. concave) if and only if the matrix $\MA$ is conditionally PSD (resp. conditionally NSD).
We begin by observing that these conditional semidefiniteness properties of $\MA$ are invariant under rescaling of the matrices in $\cA$.

\begin{lemma}\label{lem:rescale}
The conditional semidefiniteness of $\MA$ is invariant under non-zero rescalings of matrices in $\cA$. 
\end{lemma}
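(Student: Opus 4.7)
The plan is to write down explicitly how the entries of $\MA$ transform under a rescaling, and then observe that the resulting change is a matrix of a very special form that is annihilated by any vector orthogonal to $\bone$.

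First I would fix non-zero scalars $c_1, \dots, c_n$ and consider the rescaled collection $\cA' := \{c_i A_i\}_{i=1}^n$. Since the formula \eqref{eq:thm:formula} is independent of the particular rank-one decomposition (Remark~\ref{rem:skew-symm}), I can absorb the scaling however is convenient; the cleanest choice is $u_i' := c_i u_i$, $v_i' := v_i$. Then $(u_i')^T v_j' = c_i \, u_i^T v_j$, so taking logs of absolute values gives
\begin{align*}
(M_{\cA'})_{ij} \;=\; \tfrac{1}{2}\bigl(\log|c_i u_i^T v_j| + \log|c_j u_j^T v_i|\bigr)
\;=\; (\MA)_{ij} + \tfrac{1}{2}\bigl(\alpha_i + \alpha_j\bigr),
\end{align*}
where $\alpha_i := \log|c_i|$. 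In matrix form this reads $M_{\cA'} = \MA + \tfrac{1}{2}(\alpha \bone^T + \bone \alpha^T)$.

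The key observation is then that the perturbation $\tfrac{1}{2}(\alpha \bone^T + \bone \alpha^T)$ vanishes as a quadratic form on the hyperplane $\bone^\perp$: for any $p$ with $\bone^T p = 0$,
\begin{align*}
p^T\bigl(\alpha \bone^T + \bone \alpha^T\bigr) p \;=\; (p^T \alpha)(\bone^T p) + (\bone^T p)(\alpha^T p) \;=\; 0.
\end{align*}
Hence $p^T M_{\cA'} p = p^T \MA p$ for every $p \in \bone^\perp$, and conditional PSD/NSD of $\MA$ and $M_{\cA'}$ are equivalent.

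I do not expect any genuine obstacle here: the only minor subtlety is that one must justify why it suffices to check a single assignment of the scalar $c_i$ to one side of the rank-one factorization (e.g.\ entirely to $u_i$), which is handled by Remark~\ref{rem:skew-symm}, since any other split $c_i = a_i b_i$ with $u_i' = a_i u_i$, $v_i' = b_i v_i$ differs from the above choice by a skew-symmetric perturbation of the cost matrix and therefore yields the same quadratic form.
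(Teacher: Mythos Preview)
Your proposal is correct and follows essentially the same approach as the paper: both identify the perturbation of $\MA$ under rescaling as a matrix of the form $\tfrac{1}{2}(\alpha \bone^T + \bone \alpha^T)$ and observe that such a matrix vanishes as a quadratic form on $\bone^\perp$. The only cosmetic difference is that the paper treats the rescaling one matrix at a time (so $\alpha = \log|c|\, e_i$ for a single index $i$), whereas you handle all $n$ rescalings at once with a general vector $\alpha$; your version also makes the appeal to Remark~\ref{rem:skew-symm} explicit, which the paper leaves implicit.
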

\begin{proof}
It suffices to show this invariance holds under rescaling $u_i$ by some $\alpha \neq 0$. This scaling changes $\MA$ to $\MA + \Delta$ where $\Delta := \half \log|\alpha| (e_i\bone^T + \bone e_i^T)$. Now for any vector $x$ orthogonal to $\bone$, $x^T\Delta x = 0$, thus in particular $x^T(\MA + \Delta) x = x^T \MA x$. Therefore $\MA + \Delta$ is conditionally PSD (resp. NSD) if and only if $\MA$ is.
\end{proof}

As seen next, $\MA$ is neither conditionally PSD nor conditionally NSD in general. This is true even if all matrices in $\cA$ are symmetric, i.e. when each $u_i = v_i$, so the rest of this section henceforth restricts to this symmetric case $\cA = \{u_iu_i^T\}_{i=1}^n$ for simplicity of exposition.

\begin{example}\label{exa:not-nsd}
Let $\cA = \{ e_1e_1^T, e_2e_2^T \} \subset \R^{2 \times 2}$, where $e_1 := [1,0]^T$ and $e_2 := [0,1]^T$. Then $\MA = \begin{bmatrix}
0 & -\infty \\
-\infty & 0
\end{bmatrix}$ is not conditionally NSD since $\begin{bmatrix} 1 & -1 \end{bmatrix} \MA \begin{bmatrix} 1 \\ -1 \end{bmatrix} = \infty \not \leq 0$.
\end{example}

\begin{example}\label{exa:not-psd}
Let $\cA = \{e_1e_1^T, e_2e_2^T, (e_1+e_2)(e_1+e_2)^T \} \subset \R^{2 \times 2}$, where $e_1 := [1,0]^T$ and $e_2 := [0,1]^T$. Then $\MA = 
\begin{bmatrix}
0 & -\infty & 0 \\
-\infty & 0 & 0 \\
0 & 0 & \log 2
\end{bmatrix}
$
is not conditionally PSD since $\begin{bmatrix} 1 & 1 & -2 \end{bmatrix} \MA \begin{bmatrix} 1 \\ 1 \\ -2 \end{bmatrix} = -\infty \not \geq 0$.
\end{example}

Note that Examples~\ref{exa:not-nsd} and~\ref{exa:not-psd} are easily modified so that $\MA$ only has finite entries: add $\eps \bone$ to each of the vectors $u_i$, for some small $\eps > 0$.

We conclude the section by connecting this lack of convexity of $p \mapsto \lyap$ to the Martin distance on the space of lines (i.e., the $(1,n)$ Grassmanian). For simplicity, we continue to restrict to the symmetric case $A_i = u_iu_i^T$ and assume that the vectors $u_i$ are normalized to have unit Euclidean norm (this does not change convexity by Lemma~\ref{lem:rescale}). Since $-\MA$ has diagonal entries of zero (because of the normalization $\|u_i\|_2 = 1$) and off-diagonal entries that are strictly positive (by Cauchy-Schwarz), a celebrated result of Schoenberg ensures that $-\MA$ is conditionally NSD if and only if there is an embedding $z_1, \dots, z_n \in \ell_2$ such that the Euclidean distance $\|z_i - z_j\|_2 = \sqrt{- (\MA)_{ij}}$ for each $i,j \in \{1,\dots, n\}$~\citep[Theorem 1]{Schoenberg35}. In particular, this implies that if $\MA$ were conditionally PSD, then $\{\sqrt{- (\MA)_{ij}} \}_{i,j=1}^n$ satisfy the triangle inequality. However, 
\[
\sqrt{- (\MA)_{ij}} = \sqrt{-\log |u_i^Tu_j|}
\]
is precisely the Martin distance between the lines in directions $u_i$ and $u_j$, and the Martin distance is well-known to \emph{not} satisfy the triangle inequality (i.e. it is not a metric\footnote{E.g., the construction from Example~\ref{exa:not-psd}: $u_1 = e_1$, $u_2 = e_2$, and $u_3 = e_1 + e_2$. Then $\sqrt{-\log|u_1^Tu_3| } + \sqrt{-\log|u_2^Tu_3| } = 0 + 0 < \infty = \sqrt{-\log |u_1^Tu_2|}$, violating the triangle inequality.})~\citep[Section 12]{DezaDeza09}. Therefore $\MA$ cannot always be conditionally PSD.

\section{Optimizing the distribution}\label{sec:inapprox}
We now investigate the design problem from an algorithmic viewpoint. First we show in Subsection~\ref{subsec:inapprox} that this problem is $\NPclass$-hard in general; then in Subsection~\ref{subsec:ptas}, we briefly note that in a certain ``well-conditioned'' case, there is a polynomial-time approximation algorithm.

\subsection{Inapproximability}\label{subsec:inapprox}
The main result here (Theorem~\ref{thm:dec}) is that even the stabilization decision problem for Lyapunov exponents is $\NPclass$-hard. That is, unless $\Pclass = \NPclass$, there is no polynomial-time algorithm for deciding whether there exists a distribution $p \in \Delta_n$ satisfying $\lyap < 0$. Moreover, these hardness results hold even in the ``simple'' case where $\cA$ contains only rank-one symmetric matrices.

\begin{theorem}\label{thm:dec}
Given a collection $\cA$ of $n$ symmetric rank-one matrices, it is $\NPclass$-hard to decide whether
\begin{align}
\min_{p \in \Delta_n} \lyap
\label{eq:opt}
\end{align}
is negative.
\end{theorem}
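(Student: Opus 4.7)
The plan is to give a polynomial-time reduction from the $\NPclass$-hard Independent Set decision problem: given a graph $G = (V, E)$ on $n$ vertices and an integer $K$, decide whether $\alpha(G) \geq K$. By Theorem~\ref{thm:formula}, for $\cA = \{u_i u_i^T\}_{i=1}^n$ we have $\lyap = p^T \MA p$ with $(\MA)_{ij} = \log|u_i^T u_j|$, so the design question reduces to determining the sign of $\min_{p \in \Delta_n} p^T \MA p$. The task is then to construct $u_1, \dots, u_n$ in polynomial time from $(G, K)$ so that $\min_{p \in \Delta_n} p^T \MA p < 0$ if and only if $\alpha(G) \geq K$.

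The bridge between the combinatorial problem and the quadratic form is the Motzkin--Straus theorem: $\min_{p \in \Delta_n} p^T(I + A_G) p = 1/\alpha(G)$, where $A_G$ is the adjacency matrix of $G$. Subtracting $J/K$ (where $J$ is the all-ones matrix) yields $\min_{p \in \Delta_n} p^T(I + A_G - J/K) p = 1/\alpha(G) - 1/K$, which is strictly negative iff $\alpha(G) > K$. The idea is therefore to realize the matrix $I + A_G - J/K$ as $\MA$ up to ``allowable shifts'' that preserve the sign of the minimum over $\Delta_n$. These shifts come from rescaling the vectors: scaling all $u_i$ by a common factor $e^{c/2}$ shifts every entry of $\MA$ by $c$, while scaling each $u_i$ by $\alpha_i$ adds the linear term $2 \sum_i p_i \log|\alpha_i|$ to $p^T \MA p$ (cf.\ Lemma~\ref{lem:rescale}).

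For the explicit construction, I would take each $u_i \in \R^{1 + |E|}$ with a common value $a$ in a ``shared'' coordinate and value $\pm b$ in each coordinate associated to an edge incident to $i$ (zero in all other coordinates). With a suitable sign convention, this makes $|u_i^T u_j|$ take one value for pairs $\{i,j\} \in E$ and a different value for pairs $\{i, j\} \notin E$, yielding a $\MA$ whose off-diagonal entries have exactly two distinct values. Substituting into $p^T \MA p$ and using the identity $\sum_i p_i^2 + 2 \sum_{\{i,j\} \in E} p_i p_j + 2 \sum_{\{i,j\} \notin E,\, i \neq j} p_i p_j = 1$, one can express $p^T \MA p$ as an affine combination of $\sum_i p_i^2$ and $p^T A_G p$ plus a constant. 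Tuning $a$, $b$, and the uniform rescaling then matches the Motzkin--Straus target.

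The main obstacle I anticipate is the Cauchy--Schwarz constraint $|u_i^T u_j| \leq \|u_i\| \|u_j\|$, which forces $(\MA)_{ij} \leq \tfrac{1}{2}((\MA)_{ii} + (\MA)_{jj})$ and, if one insists on realizing $I + A_G - J/K$ exactly, demands $u_i$ and $u_j$ be parallel whenever $\{i,j\} \in E$—infeasible for most graphs. The resolution is to add a small diagonal slack $\delta > 0$ to the target (which costs only an extra $\delta \sum_i p_i^2$ in the quadratic form) and show that, for $\delta$ chosen polynomially small in $n$ and $K$, the sign of the minimum still cleanly separates $\alpha(G) \geq K+1$ from $\alpha(G) \leq K$: combining the Motzkin--Straus lower bound with the $\delta I$ perturbation gives $\min p^T \MA p \geq \delta/n > 0$ in the NO case, while plugging the uniform distribution on a maximum independent set of size $\geq K+1$ yields a negative value in the YES case. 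The remaining details are to verify that the perturbed target is realizable by the explicit coordinate construction above (in particular that the resulting signed Gram matrix is PSD), and to ensure that all numerical parameters can be encoded as rationals of polynomial bit-length.
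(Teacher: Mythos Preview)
Your route is genuinely different from the paper's. The paper does not reduce from the Independent Set \emph{decision} problem at all: it sets $B := 3nI + \exp[I+A_G]$ (entrywise exponential), which is diagonally dominant hence PSD, factors $B = U^TU$, and takes $\cA = \{u_iu_i^T\}$. This yields $\MA = (I+A_G) + (\log(3n+e)-1)I$, so the diagonal slack is $\Theta(\log n)$---far from small. The paper then shows $\min_p p^T\MA p$ lies within an $O(\log n)$ multiplicative factor of $1/\alpha(G)$ (Lemma~\ref{lem:ineq}), and uses binary search over uniform scalings $c\cA$ (Observation~\ref{obs:scaling}) to convert a sign oracle into an $O(\log n)$-factor approximation to $\alpha(G)$; hardness then comes from Zuckerman's $n^{1-\eps}$ \emph{inapproximability} of Independent Set (Theorem~\ref{thm:indep-inapprox}). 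In short, the paper tolerates a large slack by invoking inapproximability plus binary search, whereas you aim to drive the slack below $1/K$ and reduce directly from the decision version---more elementary in its hardness assumption, but requiring more delicate parameter tuning.

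One gap to flag in your outline: with the edge-coordinate construction as stated, $(\MA)_{ii} = \log(a^2 + d_ib^2)$ depends on the degree $d_i$, so $p^T\MA p$ is \emph{not} an affine combination of $\sum_i p_i^2$ and $p^TA_Gp$ plus a constant, contrary to what you assert. The slack is vertex-dependent: $\delta_i = \log(1+d_it)/\log(1+t) - 1$ with $t = b^2/a^2$. This is easily patched---either append a private coordinate of squared length $(d_{\max}-d_i)b^2$ to each $u_i$ to equalize the norms, or simply carry the uniform bound $0 \leq \delta_i \leq (\log n)/\log(1+t)$ through the argument (after disposing of isolated vertices). Either way you will need $t$ of order $n^{K}$ to force $\max_i\delta_i < 1/K$; this keeps the bit-length of $a,b$ polynomial in $n$ and $K$, but you must still check that the constant shift (equivalently, the choice of $\log a^2$) can be placed in the required open interval using rational $a,b$ of polynomial size. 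Finally, ``the resulting signed Gram matrix is PSD'' is automatic---it is $U^TU$ by construction; the nontrivial verification is that the induced $\MA$ hits your target form up to the controllable error above.
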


A few remarks. First, note that Theorem~\ref{thm:dec} can be equivalently restated in terms of the Lyapunov spectral radius $R(\cA,p) = e^{\lyap}$ (see~\eqref{eq:def:explyap}). That is, it is $\NPclass$-hard to decide whether
\begin{align}
\min_{p \in \Delta_n} \explyap
\label{eq:opt-explyap}
\end{align}
is less than $1$ for a collection $\cA$ of $n$ symmetric rank-one matrices. Recall from the introduction that $\explyap < 1$ is equivalent to the (a.s.) asymptotic convergence of the stochastic linear dynamical system~\eqref{eq:lds}.
\par Next, we emphasize that this hardness result is somewhat surprising (at least to the authors) in the sense that the analogous optimization problem for the Joint Spectral Radius -- which is the deterministic kindred of the Lyapunov exponent (see the introduction for details) -- can be computed exactly in polynomial time for rank-one matrices~\citep[Remark 2.1]{AhmPar12}.
\par We now turn to proving Theorem~\ref{thm:dec}. Note that by Theorem~\ref{thm:formula}, it suffices to show the $\NPclass$-hardness of deciding the sign of the following quadratic optimization problem over the simplex:
\begin{align}
\min_{p \in \Delta_n} p^T \MA p,
\label{eq:opt-quad}
\end{align}
where $(\MA)_{ij} = \log \abs{u_i^Tu_j}$. We observed in the previous Section~\ref{sec:convexity} that, in general, this optimization problem~\eqref{eq:opt-quad} is not convex since $\MA$ is not conditionally PSD. Although this lack of convexity is a first step towards suggesting the hardness of the minimization problem~\eqref{eq:opt-quad}, it of course does not constitute a hardness proof in itself.
\par We will prove Theorem~\ref{thm:dec} via a reduction from approximating the Independent Set problem. It is helpful to first recall two classical results, namely the Motzkin-Straus formulation for the Independent Set problem as a quadratic optimization over the simplex, and the hardness of approximation of the Independent Set problem. Below, $\alpha(G)$ denotes the \emph{independence number} of a graph $G$, i.e. the size of the largest subset of nodes that has no edges between them. $A_G$ denotes the adjacency matrix of $G$, i.e. the entry $(A_G)_{ij}$ is $1$ if $(i,j)$ is an edge of $G$ and is $0$ otherwise.

\begin{theorem}\citep[Corollary 1]{MotStra65}
\label{thm:motzkin-strauss}
For any undirected graph $G$, $
\frac{1}{\alpha(G)}
= \min_{p \in \Delta_n} p^T(I + A_G)p$.
\end{theorem}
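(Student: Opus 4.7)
The plan is to prove the two inequalities $\min_p p^T(I+A_G)p \leq 1/\alpha(G)$ and $\min_p p^T(I+A_G)p \geq 1/\alpha(G)$ separately. Denote $f(p) := p^T(I + A_G)p$ throughout, and let $M := I + A_G$ so that $M_{ii} = 1$ for all $i$, $M_{ij} = 1$ if $(i,j) \in E(G)$, and $M_{ij} = 0$ otherwise.

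For the upper bound, I exhibit an explicit feasible distribution. Let $S \subseteq \{1,\dots,n\}$ be a maximum independent set, so $|S| = \alpha(G)$, and put $p^*_i = 1/\alpha(G)$ for $i \in S$ and $p^*_i = 0$ otherwise. Since $S$ contains no edges, $(p^*)^T A_G p^* = 0$, so $f(p^*) = \|p^*\|_2^2 = \alpha(G) \cdot (1/\alpha(G))^2 = 1/\alpha(G)$.

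For the lower bound, I use a mass-shifting argument to reduce to an independent-set support. Let $p$ be any minimizer of $f$ over $\Delta_n$, and suppose two vertices $i, j \in \supp(p)$ are adjacent in $G$. The key computation is that along the direction $\delta := e_j - e_i$,
\begin{align*}
\delta^T M \delta = M_{ii} + M_{jj} - 2 M_{ij} = 1 + 1 - 2 = 0,
\end{align*}
so $t \mapsto f(p + t\delta)$ is linear on any interval of $t$ for which $p + t\delta$ remains in the simplex. By optimality of $p$, this linear function must be constant (else some direction of motion would strictly decrease $f$). Hence I may slide $t$ from $0$ up to $p_i$ without changing the objective, arriving at a new minimizer $p'$ with $p'_i = 0$ and $\supp(p') \subsetneq \supp(p)$. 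Iterating this procedure finitely many times yields a minimizer $p^\circ$ whose support $S^\circ$ is an independent set in $G$. On this support, $A_G$ restricts to the zero matrix, so
\begin{align*}
f(p^\circ) = \sum_{i \in S^\circ} (p^\circ_i)^2 \geq \frac{1}{|S^\circ|} \geq \frac{1}{\alpha(G)},
\end{align*}
where the first inequality follows from Cauchy--Schwarz applied to $\sum_{i \in S^\circ} p^\circ_i = 1$, and the second from $|S^\circ| \leq \alpha(G)$. Combined with the upper bound, this yields the claimed identity.

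I do not expect significant obstacles: the identity $\delta^T M \delta = 0$ for edges is an immediate consequence of the form $M = I + A_G$, and the optimality argument along a linear direction is standard. The only care needed is to justify that the mass-shifting procedure terminates (it does, since $|\supp(p)|$ strictly decreases at each step and is bounded below by $1$) and preserves feasibility (it does, since we stop as soon as the boundary of the simplex is reached).
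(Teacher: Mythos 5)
Your proof is correct. Note that the paper does not prove this statement at all --- it is imported as a black box from the cited reference \citep[Corollary 1]{MotStra65} --- so there is no in-paper argument to compare against. Your argument (uniform distribution on a maximum independent set for the upper bound; the mass-shifting step exploiting $\delta^T(I+A_G)\delta = 0$ across an edge to reduce the support of a minimizer to an independent set, then Cauchy--Schwarz for the lower bound) is essentially the classical Motzkin--Straus argument, and all the details you flag as needing care (termination, feasibility, and the two-sided feasibility of $t$ near $0$ that forces the linear term to vanish) are handled correctly.
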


\begin{theorem}\citep[Theorem 1.1]{Zuckerman06}\footnote{The complexity result stated in~\citep{Zuckerman06} -- as well in much of the related literature -- is for the problem of computing the size of the largest clique in a graph. However, this problem is well-known to be equivalent to the independent set problem, since a set of nodes is a clique in a graph if and only if it is an independent set in the complement graph.}\label{thm:indep-inapprox}
For all $\eps > 0$, it is $\NPclass$-hard to approximate the independence number of a graph on $n$ nodes to within a multiplicative factor of $n^{1-\eps}$.
\end{theorem}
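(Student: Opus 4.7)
The plan is to follow the standard pipeline in hardness-of-approximation theory: PCP theorem, plus a randomness-efficient version of the FGLSS reduction to graph independent set, with gap amplification via extractors/dispersers. Let me elaborate.

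First I would invoke the PCP theorem to obtain, for any language $L \in \NPclass$, a probabilistically checkable proof verifier that uses $r(n) = O(\log n)$ random bits, reads $q = O(1)$ proof bits, has perfect completeness, and has some constant soundness $s<1$. Standard techniques (Raz's parallel repetition, or Hastad's 3-query PCP) can then reduce the soundness to any arbitrarily small constant $s'$ while keeping completeness equal to $1$ and randomness $O(\log n)$.

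Second I would apply the FGLSS reduction, which converts such a verifier into a graph $G$ whose vertices are pairs (random string, locally accepting proof pattern) and whose edges connect pairs that assign inconsistent values to the same proof bit. The standard analysis shows that independent sets of $G$ are in bijection with proofs together with the random strings on which the verifier accepts them; thus $\alpha(G) = 2^{r(n)} \cdot \max_{\pi} \Pr[\text{verifier accepts }\pi]$. Therefore yes-instances yield $\alpha(G) = 2^{r(n)}$ while no-instances yield $\alpha(G) \leq s' \cdot 2^{r(n)}$, producing a constant-factor gap in the independence number on a graph of polynomial size.

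The main obstacle, and Zuckerman's central technical contribution, is to amplify this constant factor gap into an $n^{1-\eps}$ factor without paying too much in graph size. The naive approach of running $k$ independent parallel repetitions of the verifier uses $k \cdot O(\log n)$ random bits, which blows the vertex count up to $n^{\Theta(k)}$; one then only obtains a gap of $n^{1-\Theta(1/k)}$, which yields constant-factor hardness but never beats the $n^{1-\eps}$ threshold for small $\eps$. The workaround is to replace truly independent repetitions by pseudorandom repetitions: use an explicit disperser (or extractor) to generate $k$ correlated random strings from only $(1+\delta)\log n$ truly random seed bits, while ensuring that the acceptance probability on the correlated strings still drops from $1$ down to $n^{-\Omega(1)}$ on no-instances. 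Feeding this derandomized verifier into the FGLSS reduction produces a graph on $N = n^{1+\delta}$ vertices with an independence number ratio of at least $N^{1-O(\delta)}$ between yes- and no-instances. Choosing $\delta$ small in terms of $\eps$ then establishes the claimed $n^{1-\eps}$ inapproximability. The hardest technical step is the construction and parameter-tuning of the disperser so that both the seed length and the sampling error match what the FGLSS soundness analysis requires; everything else is essentially bookkeeping on top of the PCP theorem.
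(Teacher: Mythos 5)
This statement is not proved in the paper at all: it is quoted verbatim (modulo the clique/independent-set complementation noted in the footnote) from Zuckerman, and is used purely as a black box in the reductions of Theorems~\ref{thm:dec} and~\ref{thm:inapprox}. So there is no in-paper proof to compare against, and for the purposes of this paper you should not be reproving it.

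Taken on its own terms, your sketch correctly identifies the H{\aa}stad--Zuckerman pipeline (PCP theorem, FGLSS graph, derandomized amplification via dispersers), but it has a genuine gap at the amplification step: you never control the number of \emph{accepting views per random string}. In the FGLSS graph the vertex count is roughly $2^{r+f}$, where $f$ is the number of \emph{free bits} of the verifier, while the independence-number gap between yes- and no-instances is $1/s$. To conclude hardness within $N^{1-\eps}$ on an $N$-vertex graph you need $\log(1/s) \geq (1-\eps)(r+f)$, so even after a disperser compresses the randomness of $k$ repetitions down to $(1+\delta)r$, the accumulated free bits $kf$ must stay negligible compared to $r$ while $k$ is large enough to drive the soundness down to about $2^{-r}$. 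A generic constant-query, constant-soundness PCP (or one obtained by parallel repetition, which inflates the query/free-bit count proportionally) does not achieve this: the relevant quantity is the \emph{amortized free bit complexity}, and the argument only reaches exponent $1-\eps$ for every $\eps>0$ because H{\aa}stad constructed PCPs for $\NPclass$ whose amortized free bit complexity tends to zero. Without invoking that ingredient, your construction stalls at $N^{c}$ for some fixed constant $c<1$ determined by the ratio of free bits to soundness, and never yields the full $n^{1-\eps}$ statement. (A secondary inaccuracy: Zuckerman's specific contribution is making the disperser construction explicit so that the hardness holds under $\Pclass \neq \NPclass$ rather than $\NPclass \not\subseteq \ZPTIMEclass(\cdot)$; the gap-amplification strategy itself predates his paper.)
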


The reduction for proving Theorem~\ref{thm:dec} is as follows. Given an undirected graph $G$ on $n$ nodes and with edge set $E$, let $B := 3nI + \exp[I + A_G]$, where the exponential is applied entrywise. Note that $B$ is symmetric and diagonally dominant since for each $i \in [n]$, $B_{ii} = 3n + e \geq n + (e-1) \cdot \text{degree}(i) - 1= \sum_{j \neq i} B_{ij}$. Thus $B$ is PSD 
and in particular admits a factorization $B = U^TU$ for some $U \in \R^{d \times n}$, $d \leq n$. Denote the columns of $U$ by $u_1, \dots, u_n \in \R^d$, and let $M \in \Rnn$ be the matrix with entries $M_{ij} := \log |u_i^Tu_j| = \log |(U^TU)_{ij}| = \log |B_{ij}| = \log B_{ij}$. Now the key observation is that $\min_{p \in \Delta_n} p^TMp$ is an $O(\log n)$ multiplicative approximation to $\tfrac{1}{\alpha(G)}$:

\begin{lemma}\label{lem:ineq} $\frac{1}{\alpha(G)} \leq \min_{p \in \Delta_n} p^TMp \leq O(\log n) \frac{1}{\alpha(G)}$.
\end{lemma}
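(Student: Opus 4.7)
The plan is to unpack the matrix $M$ explicitly and then reduce to the Motzkin--Straus formulation (Theorem~\ref{thm:motzkin-strauss}). Because the exponential in the definition of $B$ is applied entrywise, the entries of $B = 3nI + \exp[I + A_G]$ are $B_{ii} = 3n + e$, $B_{ij} = e$ when $(i,j)$ is an edge of $G$, and $B_{ij} = 1$ when $(i,j)$ is a non-edge. Taking entrywise logs gives $M_{ii} = \log(3n+e)$, $M_{ij} = 1$ on edges, and $M_{ij} = 0$ on non-edges. Crucially, this exhibits $M$ as
\[
M \;=\; (I + A_G) \;+\; c\, I, \qquad c \;:=\; \log(3n+e) - 1,
\]
where $c \geq 0$ and $c = O(\log n)$. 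Consequently, for every $p \in \Delta_n$,
\[
p^T M p \;=\; p^T(I + A_G)\,p \;+\; c\,\|p\|_2^2.
\]

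For the lower bound, I would simply apply Motzkin--Straus to the first term to get $p^T(I+A_G)p \geq 1/\alpha(G)$ and discard the nonnegative second term, yielding $p^T M p \geq 1/\alpha(G)$ for all $p \in \Delta_n$.

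For the upper bound, I would test the above expression on the distribution $p^*$ that is uniform on a maximum independent set $S$ of $G$, i.e., $p^*_i = 1/\alpha(G)$ for $i \in S$ and $p^*_i = 0$ otherwise. Since $S$ contains no edges of $G$, the quadratic form $p^{*T} A_G p^* = 0$, while $\|p^*\|_2^2 = 1/\alpha(G)$. Plugging into the decomposition gives $p^{*T} M p^* = (1 + c)/\alpha(G) = \log(3n+e)/\alpha(G) = O(\log n)\cdot 1/\alpha(G)$, which upper-bounds the minimum.

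The main ``obstacle'' is really just the bookkeeping of verifying that the shifts $3nI$ and $I$ appearing inside the definition of $B$ lead, after taking entrywise logs, to exactly the matrix $(I+A_G) + cI$ with a controlled $O(\log n)$ diagonal shift; the remaining work is a direct invocation of Motzkin--Straus and a single well-chosen test distribution. Notice that the $3nI$ shift, which was used earlier only to ensure diagonal dominance and hence positive semidefiniteness of $B$, inflates $c$ by only an $O(\log n)$ factor, which is exactly the approximation loss appearing in the lemma's statement.
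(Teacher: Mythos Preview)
Your proof is correct and follows essentially the same approach as the paper: both identify the decomposition $M = (I + A_G) + (\log(3n+e)-1)I$ and invoke Motzkin--Straus. The only cosmetic difference is in the upper bound, where the paper uses the entrywise inequality $M \leq O(\log n)(I+A_G)$ and reapplies Motzkin--Straus, whereas you evaluate directly at the Motzkin--Straus optimizer $p^*$; these yield the same constant $\log(3n+e)$.
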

\begin{proof}
Since $M = (I + A_G) + (\log(3n + e) - 1) I$, we have
\begin{align}
I + A_G
\leq 
M
\leq
O(\log n) \left( I + A_G \right),
\label{eq:reduction-entrywise}
\end{align}
where the matrix inequalities are in the entrywise sense. Thus
\begin{align}
\min_{p \in \Delta_n} p^T(I + A_G)p
\leq 
\min_{p \in \Delta_n} p^TMp
\leq
O(\log n) \min_{p \in \Delta_n} p^T( I + A_G)p.
\label{eq:reduction-opt}
\end{align}
The proof is then complete by applying Theorem~\ref{thm:motzkin-strauss}.
\end{proof}

The last ingredient we will need for the proof of Theorem~\ref{thm:dec} is the simple but helpful observation that $\lyap$ additively shifts if all matrices in $\cA$ are scaled by the same constant. This follows immediately from the definition of the Lyapunov exponent -- or equivalently, by homogeneity of the Lyapunov spectral radius. 

\begin{obs}\label{obs:scaling}
Let $\cA$ be some collection of square matrices, and let $c \cA := \{c A : A \in \cA  \}$ for some positive scalar $c$. Then $\lyapc = \lyap + \log c$ for all $p \in \Delta_n$.
\end{obs}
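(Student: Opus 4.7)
The plan is to prove this directly from the definition of the Lyapunov exponent, exploiting multiplicativity of the norm under positive scalar multiplication. Specifically, I will take the defining limit
\[
\lambda(c\cA, p) = \lim_{k \to \infty} \frac{1}{k}\, \E \log \norm{(cA_{\sigma_k}) \cdots (cA_{\sigma_1})}
\]
and pull out the $k$ factors of $c$ using the $k$-fold homogeneity of any norm under scaling by positive reals, i.e. $\norm{cM} = c\norm{M}$ applied iteratively gives $\norm{(cA_{\sigma_k})\cdots(cA_{\sigma_1})} = c^k \norm{A_{\sigma_k}\cdots A_{\sigma_1}}$.

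Taking logarithms splits this into $k \log c + \log \norm{A_{\sigma_k}\cdots A_{\sigma_1}}$, and the prefactor $1/k$ together with linearity of expectation gives
\[
\lambda(c\cA, p) = \log c + \lim_{k \to \infty} \frac{1}{k}\, \E \log \norm{A_{\sigma_k} \cdots A_{\sigma_1}} = \log c + \lyap,
\]
as claimed. The existence of the limit on the right-hand side is guaranteed by the same Furstenberg--Kesten-type considerations that make $\lyap$ well-defined in the first place, so there is no convergence subtlety to handle beyond what is already built into the definition~\eqref{eq:def:lyap}.

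There is essentially no main obstacle: the result is a bookkeeping consequence of homogeneity and the linearity of expectation and logarithm extraction. The only thing to note is that $c > 0$ is required so that $\log c$ is real-valued (and so that the norm of $cA$ is literally $c\norm{A}$ rather than $|c|\norm{A}$, which would of course give the same answer but feels cleaner to state with $c>0$). No further machinery, and in particular no use of the rank-one formula from Theorem~\ref{thm:formula}, is needed.
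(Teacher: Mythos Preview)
Your proof is correct and takes essentially the same approach as the paper, which simply notes that the observation ``follows immediately from the definition of the Lyapunov exponent --- or equivalently, by homogeneity of the Lyapunov spectral radius.'' You have merely written out the one-line homogeneity computation in full detail.
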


\begin{proof}[Proof of Theorem~\ref{thm:dec}]
Assume there exists a polynomial-time algorithm for this decision problem. Now consider an undirected graph $G$ on $n$ nodes, and note that the aforementioned matrix $M$ and vectors $\{u_i\}_{i=1}^n$ are computable in polynomial time in $n$. Letting $\cA := \{ u_iu_i^T \}_{i=1}^n$, we have by Theorem~\ref{thm:formula} that $\lyap = p^TMp$. Thus by Lemma~\ref{lem:ineq} and the trivial bound $\alpha(G) \in [1,n]$, it holds that
\begin{align}
\min_{p \in \Delta_n} \lyap \in \left[n^{-1}, O(\log n)\right].
\label{eq:pf-inapprox:search}
\end{align}
By Observation~\ref{obs:scaling}, we may -- via binary search -- refine the confidence interval in~\eqref{eq:pf-inapprox:search} to size $n^{-1}$ using $O(\log (\tfrac{O(\log n) - n^{-1}  }{n^{-1}}))
= O(\log n)$ queries to the assumed decision algorithm on $c \cA = \{c A : A \in \cA \}$, for appropriate scalings $c$. This yields a polynomial-time\footnote{To be precise, this requires checking that the bit-complexity of the inputs to the decision algorithm is also polynomial in $n$; however, this is not an issue. Specifically, in light of Observation~\ref{obs:scaling} and the original confidence interval~\eqref{eq:pf-inapprox:search}, each scaling $c$ used in the binary search satisfies $\log c \in [n^{-1}, O(\log n)]$. Moreover, since we refine the search interval only to size $O(n^{-1})$, it suffices to truncate each $\log c$ to precision $O(n^{-1})$. Together, this implies 
that the bit complexity of the scaled matrices $c \cA$ is amplified from that of $\cA$ by at most a polynomial factor in $n$.}  approximation to $\min_{p \in \Delta_n} \lyap$ to additive error $n^{-1}$, and therefore by Lemma~\ref{lem:ineq} yields a polynomial-time approximation $\alpha(G)$ within a multiplicative factor of $O(\log n)$. But this is $\NPclass$-hard by Theorem~\ref{thm:indep-inapprox}.
\end{proof}

We remark that although the above reduction was for $\NPclass$-hardness of \emph{deciding the sign} of $\min_{p \in \Delta_n} \lyap$, it also demonstrates an $\NPclass$-hardness barrier for \emph{approximating the value} of $\min_{p \in \Delta_n} \lyap$. This is recorded briefly below. Note that we show hardness for approximating $\lyapmin$ to \emph{additive} error (rather than multiplicative) since this implies hardness for multiplicatively approximating the optimal Lyapunov spectral radius $\explyap = e^{\lyap}$, which is the relevant rate of the dynamical system~\eqref{eq:lds}.

\begin{theorem}\label{thm:inapprox}
For all $\eps > 0$, it is $\NPclass$-hard to approximate $\min_{p \in \Delta_n} \lyap$ to an additive error of $n^{-\eps}$ for a
collection $\cA$ of $n$ symmetric rank-one matrices.
\end{theorem}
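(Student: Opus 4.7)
The plan is to reuse the reduction from Independent Set constructed in the proof of Theorem~\ref{thm:dec}, and to observe that it already produces enough of a gap to forbid additive $n^{-\eps}$ approximations. First, I will invoke the standard ``gap formulation'' of Theorem~\ref{thm:indep-inapprox}: for any $\eta > 0$, it is $\NPclass$-hard to distinguish graphs $G$ on $n$ nodes satisfying $\alpha(G) \leq n^{\eta/2}$ from those satisfying $\alpha(G) \geq n^{1-\eta/2}$. This follows from Zuckerman's $n^{1-\eta}$-inapproximability result by a standard argument (the ratio $n^{1-\eta}$ between the two thresholds forces any such algorithm to achieve approximation ratio better than $n^{1-\eta}$).

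Second, given the input graph $G$, I will apply the polynomial-time construction from the proof of Theorem~\ref{thm:dec} to build a collection $\cA$ of $n$ symmetric rank-one matrices of polynomial bit-complexity. By Lemma~\ref{lem:ineq},
\[
\frac{1}{\alpha(G)} \;\leq\; \min_{p \in \Delta_n} \lyap \;\leq\; \frac{C \log n}{\alpha(G)}
\]
for some absolute constant $C$. Hence, if $\alpha(G) \leq n^{\eta/2}$ then $\min_p \lyap \geq n^{-\eta/2}$, whereas if $\alpha(G) \geq n^{1-\eta/2}$ then $\min_p \lyap \leq C\log n \cdot n^{\eta/2 - 1}$. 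For $n$ sufficiently large (and $\eta < 1/2$), these two ranges are disjoint, separated by at least $\tfrac{1}{2} n^{-\eta/2}$.

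Third, given the theorem's target precision $\eps > 0$, I will choose $\eta < 2\eps$, so that $n^{-\eps} < \tfrac{1}{4} n^{-\eta/2}$ for all sufficiently large $n$. Any polynomial-time algorithm that approximates $\min_p \lyap$ to additive error $n^{-\eps}$, when run on the matrices $\cA$ produced by the reduction, would then distinguish the two cases above (e.g.\ by thresholding the returned estimate at $\tfrac{1}{2} n^{-\eta/2}$) and thus decide the $\NPclass$-hard gap promise. This contradicts $\Pclass \neq \NPclass$, completing the argument.

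There is no substantive obstacle: the proof is essentially a direct observation that the very reduction used for Theorem~\ref{thm:dec} is already a gap-preserving reduction with polynomial blow-up. The only routine checks are that the bit-complexity of $\cA$ remains polynomial in $n$ (handled exactly as in Theorem~\ref{thm:dec}) and that the ``$O(\log n)$'' slack in Lemma~\ref{lem:ineq} is negligible next to the polynomial separation $n^{-\eta/2}$, both of which hold for all sufficiently large $n$.
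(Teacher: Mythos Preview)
Your proposal is correct and follows essentially the same approach as the paper: both use the identical reduction from the proof of Theorem~\ref{thm:dec}, invoke Lemma~\ref{lem:ineq} to sandwich $\min_p \lyap$ between $1/\alpha(G)$ and $O(\log n)/\alpha(G)$, and then argue that an additive $n^{-\eps}$ approximation to $\min_p \lyap$ would yield an $n^{1-\Omega(1)}$ multiplicative approximation to $\alpha(G)$, contradicting Theorem~\ref{thm:indep-inapprox}. The only difference is presentational: the paper states the conclusion directly as ``this yields a polynomial-time approximation of $\alpha(G)$ to multiplicative factor $n^{1-\eps}O(\log n)$,'' whereas you unpack the same implication via the explicit gap formulation (distinguishing $\alpha(G)\leq n^{\eta/2}$ from $\alpha(G)\geq n^{1-\eta/2}$), which is arguably cleaner but not substantively different.
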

\begin{proof}
We begin in the same way as in the proof of Theorem~\ref{thm:dec}: given an undirected graph $G$ on $n$ nodes, construct the matrix $M$ and vectors $\{u_i\}_{i=1}^n$, let $\cA := \{ u_iu_i^T \}_{i=1}^n$, and note by Theorem~\ref{thm:formula} that $\lyap = p^TMp$. Now if $\lyapmin$ can be approximated to an additive error of $n^{-\eps}$ in polynomial time, then by Lemma~\ref{lem:ineq} this yields a polynomial-time algorithm that approximates $\alpha(G)$ to a multiplicative factor of $n^{1-\eps} O(\log n)$. But this is $\NPclass$-hard by Theorem~\ref{thm:indep-inapprox}.
\end{proof}

\subsection{PTAS for ``well-conditioned'' sets of rank-one matrices}
\label{subsec:ptas}

Here, we note that there is a polynomial-time approximation scheme (PTAS) for approximating the design problem $\min_{p \in \Delta_n} \lyap$ in the case where the set of rank-one matrices $\cA$ is ``well-conditioned.'' (Note this does not conflict with the hardness results shown above, as explained below.)
\par The main idea is that since $\lyap$ is a quadratic form in $p$ in this rank-one case by Theorem~\ref{thm:formula}, we may use the well-known PTAS for minimizing polynomials over the simplex~\citep[Theorem 1.7]{KleLauPar06}. Applying their PTAS to this design problem yields, for any \emph{fixed} constant $\delta > 0$, a polynomial-time algorithm that, given any collection $\cA$ of rank-one matrices, outputs a distribution $\hat{p} \in \Delta_n$ satisfying
\begin{align}
\lyaphat - \lyapmin \leq \delta \left( \lyapmax - \lyapmin \right).
\label{eq:ptas-approx}
\end{align}
Note that here ``polynomial time'' means time that is polynomial in the input size (namely $n$ and the bit complexity of $\cA$), but \emph{not} in $\delta$ (see the discussion in~\citep[Section 1.4]{KleLauPar06}).
\par Intuitively, this approximation guarantee~\eqref{eq:ptas-approx} is good whenever the quantity $\lyapmax - \lyapmin$ governing the error bound is not very large. One concrete setup where this provably occurs is when the set $\cA = \{u_iu_i^T \}_{i=1}^n$ is ``well-conditioned'' in the sense that all pairwise angles $\measuredangle(u_i,u_j)$ are uniformly bounded away from $\tfrac{\pi}{2}$ -- or in words, all pairs of $u_i$ and $u_j$ are ``uniformly away'' from being orthogonal. This is made precise in the following result. For simplicity of exposition, we assume that the matrices in $\cA$ are symmetric (i.e. of the form $u_iu_i^T$) and normalized ($\|u_i\|_2 = 1$), but these assumptions can be easily removed.

\begin{theorem}\label{thm:ptas}
Fix any $\delta,\gamma > 0$.
There is an algorithm that, given any $\cA = \{u_iu_i^T \}_{i=1}^n$ such that each $\|u_i\|_2 = 1$ and $|\cos \measuredangle(u_i,u_j)| \geq \gamma$, outputs $\hat{p} \in \Delta_n$ satisfying
\begin{align}
\lyaphat \leq \min_{p \in \Delta} \lyap + \delta \log \frac{1}{\gamma}
\end{align}
in time that is polynomial in $n$ and the input size of $\cA$ (but not necessarily in $\delta$ or $\gamma$).
\end{theorem}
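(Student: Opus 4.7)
The plan is to apply the KLP polynomial-time approximation scheme for minimizing polynomials over the simplex to the quadratic form $\lyap = p^T M_{\cA} p$ (from Theorem~\ref{thm:formula}), and then to bound the normalization quantity $\lyapmax - \lyapmin$ that controls the additive error using the well-conditioning assumption.

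First I would invoke Theorem~\ref{thm:formula} to rewrite the design problem as minimizing the quadratic $p \mapsto p^T M_{\cA} p$ over $\Delta_n$, where $(M_{\cA})_{ij} = \log|u_i^T u_j|$. Applying the PTAS of~\citep[Theorem 1.7]{KleLauPar06} with parameter $\delta$ produces in polynomial time (in $n$ and the bit complexity of $M_\cA$, though not in $\delta$) a distribution $\hat p \in \Delta_n$ satisfying
\begin{align}
\lyaphat - \lyapmin \;\leq\; \delta \, (\lyapmax - \lyapmin).
\label{eq:ptas-plan-bound}
\end{align}

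Next I would use the well-conditioning hypothesis to bound the right-hand side by $\delta \log(1/\gamma)$. Since each $u_i$ is a unit vector, Cauchy--Schwarz gives $|u_i^T u_j| \leq 1$, while the assumption $|\cos \measuredangle(u_i,u_j)| \geq \gamma$ gives $|u_i^T u_j| \geq \gamma$. Hence every entry of $M_\cA$ lies in $[-\log(1/\gamma), 0]$, and since $p^T M_\cA p$ is a convex combination of these entries, $\lyap \in [-\log(1/\gamma),0]$ for every $p \in \Delta_n$. Therefore $\lyapmax - \lyapmin \leq \log(1/\gamma)$, and substituting into~\eqref{eq:ptas-plan-bound} gives the claimed bound.

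The only subtle point, which I would briefly address, is that $M_\cA$ has transcendental entries, whereas the KLP PTAS assumes rational coefficients. This is handled by replacing each entry $\log|u_i^T u_j|$ with a rational approximation to sufficient precision (say $\delta\log(1/\gamma)/n^{O(1)}$), which affects $p^T M_\cA p$ uniformly by at most a negligible additive term and only blows up the bit complexity polynomially in $n$ (and in $\log(1/\gamma)$, which is part of the input size of $\cA$ via the entries $|u_i^T u_j|$). No genuine obstacle arises here; the ``hard part'' is really just the verification that the range of $\lyap$ collapses to $O(\log(1/\gamma))$ under the conditioning assumption, which is what makes the generic KLP guarantee meaningful in this context.
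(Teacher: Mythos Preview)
Your proposal is correct and follows essentially the same approach as the paper: invoke the KLP PTAS via~\eqref{eq:ptas-approx} and then bound $\lyapmax - \lyapmin \leq \log(1/\gamma)$ using the entrywise bounds $\log\gamma \leq (M_{\cA})_{ij} \leq 0$ that come from the normalization $\|u_i\|_2=1$ and the conditioning assumption. The paper packages the max and min bounds into a separate lemma (Lemma~\ref{lem:min-max}) rather than arguing directly via the entries of $M_{\cA}$, but the content is identical; your extra remark about rounding the transcendental entries of $M_{\cA}$ to rationals is a detail the paper leaves implicit.
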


We make use of the following simple bounds.

\begin{lemma}\label{lem:min-max}
Let $\cA = \{u_iu_i^T \}_{i=1}^n$. Then:
\begin{itemize}
\item $\max_{p \in \Delta_n} \lyap = 2 \max_{i} \log \|u_i\|_2$.
\item $\min_{p \in \Delta_n} \lyap \geq \min_{i,j} \log |u_i^Tu_j|$.
\end{itemize}
\end{lemma}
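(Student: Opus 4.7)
The plan is to invoke the closed-form expression from Theorem~\ref{thm:formula} and reduce both claims to elementary pointwise bounds on the entries of the Gram-log matrix $M_{ij} := \log |u_i^T u_j|$. Specifically, since the matrices $A_i = u_i u_i^T$ are symmetric, Theorem~\ref{thm:formula} gives
\[
\lyap = \sum_{i,j=1}^n p_i p_j \log |u_i^T u_j| = \sum_{i,j=1}^n p_i p_j M_{ij},
\]
so both extremal problems reduce to extremizing a quadratic form $p^T M p$ over $\Delta_n$.

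For the upper bound, I would set $a_i := \log \|u_i\|_2$ and use Cauchy--Schwarz to obtain the entrywise domination $M_{ij} \leq a_i + a_j$, with equality when $i=j$ (since $M_{ii} = 2 a_i$). Averaging against any $p \in \Delta_n$ gives
\[
\lyap \leq \sum_{i,j} p_i p_j (a_i + a_j) = 2 \sum_i p_i a_i \leq 2 \max_i a_i.
\]
Equality throughout is achieved by taking $p$ to be the point mass on an index $i^\ast \in \argmax_i \|u_i\|_2$, since then $\lambda(\cA, e_{i^\ast}) = M_{i^\ast i^\ast} = 2 \log \|u_{i^\ast}\|_2$. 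This establishes the first equality.

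For the lower bound, I would simply use that $p \otimes p$ is a probability distribution on $[n] \times [n]$, so
\[
\lyap = \sum_{i,j} p_i p_j M_{ij} \geq \min_{k,\ell} M_{k\ell} = \min_{i,j} \log |u_i^T u_j|,
\]
with no further structure required. No step here is an obstacle: both inequalities are one-line consequences of Theorem~\ref{thm:formula} together with Cauchy--Schwarz and the trivial fact that an average is at least the minimum. The only minor subtlety is handling the degenerate case where some $u_i^T u_j = 0$, but this causes no issue since the bound $\min_{i,j} \log |u_i^T u_j| = -\infty$ is then vacuous.
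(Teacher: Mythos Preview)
Your proposal is correct and follows essentially the same approach as the paper: both reduce to the quadratic form via Theorem~\ref{thm:formula}, use Cauchy--Schwarz to control the off-diagonal entries $M_{ij}$ by diagonal data, and bound the average by the extremum. The only cosmetic difference is the order of operations in the maximization part---the paper first bounds $p^T M p \leq \max_{ij} M_{ij}$ and then invokes Cauchy--Schwarz to get $\max_{ij} M_{ij} \leq \max_i M_{ii}$, whereas you apply Cauchy--Schwarz entrywise first and then average---but the content is identical.
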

\begin{proof}
By Theorem~\ref{thm:formula}, $\lyap = p^T\MA p$ where $(\MA)_{ij} = \log|u_i^Tu_j|$. For the maximization problem, note that $2 \max_{i} \log \|u_i\|$ is achieved by $p = e_i$. Moreover, this value is optimal since for any $p \in \Delta_n$,
\[
\lyap
= p^T\MA p
= \sum_{ij} p_ip_j (\MA)_{ij}
\leq \Big(\sum_{ij} p_ip_j\Big) \max_{ij} (\MA)_{ij}
= \max_{ij} (\MA)_{ij}
\leq \max_i (\MA)_{ii},
\]
where the final inequality is by the definition of $\MA$ and Cauchy-Schwarz. The bound on the minimization problem follows similarly: for any $p \in \Delta_n$,
\[
\lyap
= p^T\MA p
= \sum_{ij} p_ip_j (\MA)_{ij}
\geq \Big(\sum_{ij} p_ip_j\Big) \min_{ij} (\MA)_{ij}
= \min_{ij} (\MA)_{ij}.
\]
\end{proof}

\begin{proof}[Proof of Theorem~\ref{thm:ptas}]
By Lemma~\ref{lem:min-max},
$\lyapmax - \lyapmin
\leq
\max_{i} \log \left(\|u_i\|_2^2\right) - \min_{ij} \log \left( |u_i^Tu_j| \right)
\leq 
\log \frac{1}{\gamma}$. The proof is complete by~\eqref{eq:ptas-approx}.
\end{proof}

Note that the PTAS in Theorem~\ref{thm:ptas} is not at odds with the $\NPclass$-hardness results proved in Theorems~\ref{thm:dec} and~\ref{thm:inapprox}. This is because the runtime is \emph{not} polynomial in the accuracy parameter $\delta$, and to obtain the approximation bounds discussed in the hardness results, $\delta$ must be taken to be scaling with $n$ -- resulting in a runtime that is not polynomial in $n$.


\section{Extension to ergodic stationary processes}\label{sec:extensions}
So far in this paper, we have considered the analysis and design problems for the Lyapunov exponent in the standard setup where the random matrices are i.i.d. from a discrete distribution. In this section, we discuss extensions to the more general setup of ergodic stationary processes.
As corollaries, we discuss setups including i.i.d. processes from continuous distributions, exchangeable processes, and Markov processes.
This allows for the study of the stability of more exotic stochastic linear dynamical systems~\eqref{eq:lds}. The corresponding analysis and design problems are discussed in Subsections~\ref{subsec:extensions:stoc} and~\ref{subsec:extensions:des}, respectively. 

\subsection{Ergodic formula for rank-one matrices}\label{subsec:extensions:stoc}
Here we show that, with only a minor modification, the rank-one Lyapunov exponent formula in Theorem~\ref{thm:formula} for i.i.d. processes extends to ergodic stationary processes. The proof is similar, the primary difference being the use of Birkhoff's Ergodic Theorem.
\par Below, let $\cA = \{uv^T : u,v \in \R^d \}$, and let it be equipped with the Borel $\sigma$-algebra (arising equivalently from any norm on $\R^{d \times d}$). We consider processes $A_{\sigma_1}, A_{\sigma_2}, \dots$ taking values in $\cA$. Recall that this process is said to be \emph{stationary} if $(A_{\sigma_1}, \dots, A_{\sigma_k})$ has the same distribution as $(A_{\sigma_{\tau+ 1}}, \dots, A_{\sigma_{\tau + k}})$ for every $k,\tau \in \N$. Recall also that this process is said to be \emph{ergodic} if every shift-invariant event has probability $0$ or $1$. We refer the reader to e.g.,~\citep[Chapter 7]{Durrett10} for further background on such processes.

\begin{theorem}\label{thm:stoc}
Let $A_{\sigma_1},A_{\sigma_2}, \dots$ be an ergodic stationary process taking values in $\cA$, and let $P$ denote the marginal distribution of $(A_{\sigma_1}, A_{\sigma_2})$. If the integrability conditions $\E |\log \|A_{\sigma_1}\|| < \infty$ and $\int \abs{\log\abs{u^Tv'}} dP(uv^T,u'v'^T)
 < \infty$ are satisfied,
then the (random) quantity
\begin{align*}
\lim_{k \to \infty} \frac{1}{k} \log \left\| A_{\sigma_k} \cdots A_{\sigma_1} \right\|
\end{align*}
exists a.s., and moreover converges to
\begin{align}
\frac{1}{k} \log \left\| A_{\sigma_k} \cdots A_{\sigma_1} \right\|
\to
\int \log\abs{u^Tv'} dP(uv^T,u'v'^T)
\label{eq:thm:formula-stoc}
\end{align}
a.s. and in $L_1$.
\end{theorem}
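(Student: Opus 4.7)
The plan is to mirror the proof of Theorem~\ref{thm:formula}, with Birkhoff's Ergodic Theorem replacing the finite-state Markov chain ergodic theorem. First, I would fix a Borel measurable decomposition $A_{\sigma_t} = u_{\sigma_t} v_{\sigma_t}^T$ in which $\|v_{\sigma_t}\|_2 = 1$ (obtained by a measurable selection from the unit-norm-$v$ slice of rank-one matrices). By Remark~\ref{rem:skew-symm}, rescalings leave the right-hand side of~\eqref{eq:thm:formula-stoc} unchanged, so this normalization is harmless; moreover, it converts the hypothesis $\E|\log\|A_{\sigma_1}\|_2| < \infty$ into $\E|\log\|u_{\sigma_1}\|_2| < \infty$, which will control the boundary term. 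The same telescoping as in~\eqref{eq:pf-thm-formula:expand} then yields
\begin{align*}
\frac{1}{k}\log\|A_{\sigma_k}\cdots A_{\sigma_1}\|_2
= \frac{1}{k}\sum_{t=1}^{k-1}\log\bigl|u_{\sigma_t}^T v_{\sigma_{t+1}}\bigr|
+ \frac{1}{k}\bigl(\log\|u_{\sigma_k}\|_2 + \log\|v_{\sigma_1}\|_2\bigr).
\end{align*}

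Next, I would apply Birkhoff's Ergodic Theorem to the stationary ergodic measure that the process induces on $\cA^{\mathbb{N}}$ under the shift $T$, using the two-coordinate functional $f(A_{\sigma_1},A_{\sigma_2},\ldots) := \log|u_{\sigma_1}^T v_{\sigma_2}|$. The second integrability hypothesis guarantees $f \in L^1$, and Birkhoff then supplies both a.s.\ and $L^1$ convergence of $\frac{1}{k}\sum_{t=0}^{k-1} f \circ T^t$ to $\E f = \int \log|u^T v'|\,dP(uv^T,u'v'^T)$, which is precisely the claimed limit for the sum term above.

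It then remains to show that the boundary term $\tfrac{1}{k}(\log\|u_{\sigma_k}\|_2 + \log\|v_{\sigma_1}\|_2)$ vanishes both a.s.\ and in $L^1$. The $v$-contribution is identically zero under the normalization. The $u$-contribution vanishes in $L^1$ since stationarity gives $\E|\tfrac{1}{k}\log\|u_{\sigma_k}\|_2| = \tfrac{1}{k}\E|\log\|u_{\sigma_1}\|_2| \to 0$. For a.s.\ convergence of $\tfrac{1}{k}\log\|u_{\sigma_k}\|_2 \to 0$, stationarity plus $\E|\log\|u_{\sigma_1}\|_2| < \infty$ implies $\sum_k \Prob\bigl(|\log\|u_{\sigma_k}\|_2| > \eps k\bigr) \leq \eps^{-1}\,\E|\log\|u_{\sigma_1}\|_2| < \infty$ for every $\eps > 0$; the Borel--Cantelli lemma then closes the argument.

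The main place where care is needed is the measurability and ergodicity bookkeeping that the i.i.d.\ proof of Theorem~\ref{thm:formula} did not require: verifying that a normalized rank-one decomposition can be chosen in a Borel measurable fashion, and invoking ergodicity of the underlying shift in the form that Birkhoff applies to a functional depending on the first two coordinates. Both steps are routine, but they are the technical crux distinguishing the stationary ergodic case from the i.i.d.\ case, since the rank-one decomposition is only unique up to scaling and since ergodicity of the pair process must be derived from that of the original.
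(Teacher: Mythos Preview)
Your proposal is correct and follows essentially the same route as the paper: normalize so that $\|v_{\sigma_t}\|_2 = 1$, telescope the product, apply Birkhoff's Ergodic Theorem to the two-coordinate functional $\log|u_{\sigma_1}^T v_{\sigma_2}|$, and dispatch the boundary term using the first integrability hypothesis. Your Borel--Cantelli argument for the a.s.\ vanishing of $\tfrac{1}{k}\log\|u_{\sigma_k}\|_2$ and your attention to measurable selection are in fact more careful than the paper's own treatment, which handles these points rather tersely.
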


Note that the formula~\eqref{eq:thm:formula-stoc} is independent of the decompositions $u^Tv$ of the rank-one matrices. This follows by a simple calculation that is identical to the one in Remark~\ref{rem:skew-symm}.

\begin{proof}
%
Note that the theorem statement is independent of the norm $\|\cdot\|$ by equivalence of finite-dimensional norms; for convenience of computation, let us take it to be the spectral norm. Consider rank-one decompositions $A_{\sigma_k} = u_{\sigma_k}v_{\sigma_k}^T$ where w.l.o.g. we take $\|v_{\sigma_k}\|_2 = 1$. Just as in~\eqref{eq:pf-thm-formula:expand}, expand the (random) quantity
\begin{align}
\frac{1}{k} \log \norm{A_{\sigma_k} \cdots A_{\sigma_2} A_{\sigma_1} }_2
= \frac{1}{k} \sum_{t=1}^{k-1} \log \abs{u_{\sigma_t}^Tv_{\sigma_{t+1}}}
+ \frac{1}{k}\log \|u_{\sigma_k}\|_2.
\label{eq:pf-thm-stoc:expand}
\end{align}
To deal with the final term $\frac{1}{k}\log \|u_{\sigma_k}\|_2$, observe that by stationarity and the first integrability assumption, we have $\E |\log \norm{u_{\sigma_k}}_2| = \E |\log \norm{u_{\sigma_k}}_2 + \log \norm{v_{\sigma_k}}_2| = \E |\log \norm{A_{\sigma_k}}| = \E | \log \norm{A_{\sigma_1}}| < \infty$.
Thus the random variable $\log \|u_{\sigma_k}\|_2$ is bounded in $L_1$ (and thus also bounded a.s.), and so
\begin{align}
\frac{1}{k} \left(\log \normlr{u_{\sigma_k}}_2 + \log \normlr{v_{\sigma_1}}_2\right) \to 0,
\label{eq:pf-thm-stoc:term}
\end{align}
where the convergence is a.s. and in $L_1$. Now by ergodicity, stationarity, and the second integrability assumption, we may apply Birkhoff's Ergodic Theorem (see e.g.~\citep[Theorem 7.2.1]{Durrett10}) to obtain
\begin{align}
\frac{1}{k}  \sum_{t=1}^{k} \log \abs{u_{\sigma_t}^Tv_{\sigma_{t+1}}}
\to
\int \log\abs{u^Tv'} dP(uv^T,u'v'^T),
\label{eq:pf-thm-stoc:birk}
\end{align}
where the convergence is a.s. and in $L_1$. Combining~\eqref{eq:pf-thm-stoc:expand},~\eqref{eq:pf-thm-stoc:term}, and~\eqref{eq:pf-thm-stoc:birk} completes the proof.
\end{proof}

We remark that the ergodic formula~\eqref{eq:thm:formula-stoc} in Theorem~\ref{thm:stoc} depends on the process only through its \emph{second-order statistics}. This occurs since the matrices are all rank-one. We now discuss several examples of ergodic stationary processes for which Theorem~\ref{thm:stoc} applies.

\subsubsection{I.i.d. processes}\label{subsubsec:extensions:iid}
A first, simple example of ergodic stationary processes is an i.i.d. process. Specifically, consider the setup where $A_{\sigma_1}, A_{\sigma_2}, \dots$ are i.i.d. random variables with (the same) distribution $\mu$ over $\cA$. Then Theorem~\ref{thm:stoc} shows that, assuming some mild integrability conditions, we have the (a.s. and $L_1$) convergence:
\begin{align}
\frac{1}{k} \log \normlr{ A_{\sigma_k}\cdots A_{\sigma_1}}
\to
\int \int \log |u^Tv'| \, d\mu(uv^T) d\mu(u'v'^T)
\label{eq:extensions:iid}
\end{align}
Note that this holds for continuous distributions $\mu$, i.e. distributions with infinite support. In the special case that $\mu$ has finite support, we recover Theorem~\ref{thm:formula}.

\begin{example}\label{ex:cont}
Let $S^{d-1} = \{z \in \R^d : \|z\|_2 = 1 \}$ denote the $d-1$ sphere, for $d > 1$, and let $\UnifSd$ denote the uniform distribution over it (i.e. the $d-1$ dimensional spherical measure). In this example, we explicitly compute the Lyapunov exponent over ``normalized'' rank-one matrices $\cA := \{uv^T : u,v\in S^{d-1} \}$ for the following two probability measures:
\begin{itemize}
\item $\musym$ is the induced distribution of $uu^T \in \cA$, where $u \sim \UnifSd$.
\item $\muasym$ is the induced distribution of $uv^T \in \cA$, where $u,v \sim \UnifSd$ and are independent.
\end{itemize}
By Theorem~\ref{thm:stoc},
the Lyapunov exponent is the same for both $\musym$ and $\muasym$, and is equal to:
\begin{align}
\lyapmusym = \lyapmuasym = \E_{u,v \sim \UnifSd} \log \abs{u^Tv}.
\label{eq:ex-cont:expect}
\end{align}
To compute this, we first compute the distribution $\nu$ of $u^Tv$. By rotational invariance of the spherical measure $\UnifSd$, $\nu$ is equal to the distribution of $z_1$ where $z \sim \UnifSd$. This distribution is well-known to have density~\citep{Szego39}
\begin{align}
\frac{d\nu(t)}{dt}
=
\frac{\Gamma\left(\tfrac{d}{2}\right)}{\sqrt{\pi} \Gamma\left(\tfrac{d-1}{2}\right)}
\left( 1 - t^2 \right)^{(d-3)/2}
\mathds{1}_{t \in (-1,1)},
\label{eq:ex-cont:dist}
\end{align}
where $\Gamma(\cdot)$ denotes the Gamma function. Using this, the integral in the right hand side of~\eqref{eq:ex-cont:expect} -- and thus the Lyapunov exponents for both $\musym$ and $\muasym$ -- can be computed to be:
\begin{align}
\E_{u,v \sim \UnifSd} \log \abs{u^Tv}
&=
\frac{\Gamma\left(\tfrac{d}{2}\right)}{\sqrt{\pi} \Gamma\left(\tfrac{d-1}{2}\right)} \int_{-1}^{1} \left( 1 - t^2 \right)^{(d-3)/2} \log |t| dt
\nonumber \\ &=
- \frac{\Psi\left( \tfrac{d}{2} \right) + \gamma + \log 4 }{2},
\label{eq:ex-cont:final}
\end{align}
where $\Psi(\cdot)$ denotes the Digamma function and $\gamma \approx 0.5772$ denotes the Euler-Mascheroni constant.
\par The value of~\eqref{eq:ex-cont:final} is reported in Table~\ref{tab:ex-cont} for small values $d \in \{2, \dots, 8\}$. Note that the value for $d=3$ is equal to $-1$; this simple value arises because of the well-known fact that the distribution $\nu$ of the first coordinate of a random vector on the $2$-sphere, is the uniform distribution over $[-1,1]$ (this can be seen directly from~\eqref{eq:ex-cont:dist}). Indeed, using this one can easily derive the special case
$\E_{u,v \sim \UnifStwo}\log |u^Tv| = \half \int_{-1}^1 \log |z| dz = \int_0^1 \log z dz = (z\log z - z)_0^1 = -1$.
\par The asymptotic behavior of~\eqref{eq:ex-cont:final} as the dimension $d \to \infty$ is
\begin{align}
\lyapmusym = \lyapmuasym = -\half (\log d + \gamma + \log 2) + O(d^{-1}).
\label{eq:ex-cont:asymp}
\end{align} 
due to the well-known inequality $\log z - z^{-1} \leq \Psi(z) \leq \log z - (2z)^{-1}$ that holds for all $z > 0$~\citep{Alzer97}. Note that the leading term $\log(\tfrac{1}{\sqrt{d}})$ makes sense intuitively since if $u,v \sim \UnifSd$ are independent, then $|u^Tv|$ concentrates tightly around $\tfrac{1}{\sqrt{d}}$.\footnote{Here is a simple, standard way to derive this informally. The starting point is that if $\tilde{u} \sim \cN(\zero, I)$, the normal distribution centered at the origin and with identity covariance, then $\tilde{u}/\|\tilde{u}\|_2 \sim \UnifSd$ by rotational invariance. Thus $u^Tv$ is equal in distribution to $\tilde{u}^T\tilde{v}/(\|\tilde{u}\|_2\|\tilde{v}\|_2)$ where $u$ and $v$ are i.i.d. $\cN(\zero, I)$. Now the numerator $\tilde{u}^T\tilde{v}$ is equal in distribution to $\tilde{v}_1 \sim \cN(0,1)$ by rotational invariance, and the denominator $\|\tilde{u}\|_2\|\tilde{v}\|_2$ concentrates tightly around $\sqrt{d}$ since $\|\tilde{u}\|_2^2 = \sum_{i=1}^d \tilde{u}_i^2$ is $\Theta(\sqrt{d})$ by the Law of Large Numbers. Thus $|u^Tv|$ is approximately equal to $|\tilde{v}_1|/\sqrt{d}$, where $\tilde{v}_1 \sim \cN(0,1)$, and this concentrates tightly around $\Theta(1/\sqrt{d})$. The dropped terms in this discussion are given precisely in the formula~\eqref{eq:ex-cont:final} and asymptotically by~\eqref{eq:ex-cont:asymp}.}
\end{example}

\begin{table}[t]
\centering
\caption{The Lyapunov exponents $\lyapmusym = \lyapmuasym = - \half (\Psi( \tfrac{d}{2} ) + \gamma + \log 4)$, as derived in Example~\ref{ex:cont}, for dimensions $d \in \{2, \dots, 8\}$. Numerical values reported to six significant digits.}
\label{tab:ex-cont}
\begin{tabular}{|c|c|}
\hline
\textbf{d} & $- \half (\Psi( \tfrac{d}{2} ) + \gamma + \log 4)$ \\ \hline
2                                & $-0.693147$                              \\ \hline
3                                & $-1       $                              \\ \hline
4                                & $ -1.19315$                              \\ \hline
5                                & $ -1.33333$                              \\ \hline
6                                & $ -1.44315$                              \\ \hline
7                                & $ -1.53333$                              \\ \hline
8                                & $ -1.60981$                              \\ \hline
\end{tabular}
\end{table}
%

\subsubsection{Exchangeable processes}\label{subsubsec:extensions:exch}
Although exchangeable processes are \emph{not} ergodic stationary processes\footnote{A simple example of an exchangeable process that is not ergodic is the process $X_1, X_2, \dots$ where all random variables take the same value $X$, and $X$ is a Bernoulli random variable.}, we can understand them in terms of such processes using De Finetti's Theorem~\citep{DeFinetti38}. Specifically, consider the setup of an exchangeable process $A_{\sigma_1}, A_{\sigma_2}, \dots$ taking values in $\cA$. Recall that (infinite) exchangeability means that for each $k \in \N$ and each permutation $\pi$ of $\{1, \dots, k\}$, $(A_{\sigma_1}, \dots, A_{\sigma_k})$ has the same distribution as $(A_{\pi(\sigma_1)}, \dots, A_{\pi(\sigma_k)})$. Now De Finetti's Theorem -- or, more precisely, ~\citep{HewSav55}'s generalization thereof to complete, separable metric spaces -- states that the law of this process is equal to a mixture $\nu$ over i.i.d. product distributions over $\cA$. It therefore follows from Theorem~\ref{thm:stoc} that under the same mild integrability conditions, we have the (a.s. and $L_1$) convergence of
\begin{align}
\frac{1}{k} \log \normlr{ A_{\sigma_k}\cdots A_{\sigma_1}}
\to
\int \int \int \log |u^Tv'|\, d\mu(uv^T) d\mu(u'v'^T) d\nu(\mu).
\label{eq:extensions:exchangeable}
\end{align}

\subsubsection{Markov processes}\label{subsubsec:extensions:markov} Another example of an ergodic stationary process is a Markov process over a countable state space (for simplicity) with an irreducible transition kernel (for ergodicity) and with an initial distribution equal to the stationary distribution (for stationarity), see e.g.~\citep[Example 7.1.7]{Durrett10}. 
That is, consider the setup where the non-initial matrices $A_{\sigma_2}, A_{\sigma_3}, \dots$ are drawn from an irreducible Markov kernel $Q$ over a countable subset of the rank-one matrices $\cA$, and the initial matrix $A_{\sigma_1}$ is drawn from the stationary distribution $\pi_Q$ of the chain.
Then Theorem~\ref{thm:stoc} guarantees that, under mild integrability asumptions, we have the (a.s. and $L_1$) convergence
\begin{align}
\frac{1}{k} \log \normlr{ A_{\sigma_k}\cdots A_{\sigma_1}}
\to
\int \int \log |u^Tv'|\, \pi_Q(uv^T) Q(uv^T,u'v'^T).
\label{eq:extensions:markov}
\end{align}
Note that this recovers Theorem~\ref{thm:formula} since the original Lyapunov exponent setup is the special case where the columns of $Q$ are constant, i.e. $Q = \bone p^T$, and the stationary distribution is $\pi_Q = p$. We also remark that~\eqref{eq:extensions:markov} still holds when the assumption that the initial matrix $A_{\sigma_1}$ has stationary distribution $\pi_Q$ is relaxed.\footnote{This can be proved using a nearly identical argument as in the proof of Theorem~\ref{thm:formula}.}

\subsection{Optimizing the Lyapunov exponent}\label{subsec:extensions:des}
We now turn to the design problem of optimizing the convergence rate of the stochastic linear dynamical system~\eqref{eq:lds} over different types of stochastic processes $A_{\sigma_1},A_{\sigma_2}, \dots$. We already showed in Subsection~\ref{subsec:inapprox} that this design problem is $\NPclass$-hard for i.i.d. processes. Below, we show that the design problem is also $\NPclass$-hard for exchangeable processes (Subsection~\ref{subsubsec:des:exch}) yet admits a polynomial-time algorithm for Markov processes (Subsection~\ref{subsubsec:des:markov}).

\subsubsection{Exchangeable processes}\label{subsubsec:des:exch}
Here, we show that the design problem is $\NPclass$-hard over exchangeable processes:

\begin{theorem}\label{thm:des-exchangable}
Given a collection $\cA$ of $n$ symmetric rank-one matrices, it is $\NPclass$-hard to decide whether there exists an exchangeable process $A_{\sigma_1}, A_{\sigma_2}, \dots$ taking values in $\cA$, for which the stochastic linear dynamical system~\eqref{eq:lds} converges (a.s.) for every initial state.
\end{theorem}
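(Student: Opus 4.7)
The plan is to reduce from the i.i.d.\ design problem of Theorem~\ref{thm:dec}, using de Finetti's theorem to argue that exchangeable processes offer no additional power beyond i.i.d.\ processes for this stabilization question. Concretely, I will show that there exists an exchangeable process on $\cA$ stabilizing~\eqref{eq:lds} a.s.\ from every initial state if and only if there exists $p \in \Delta_n$ with $\lyap < 0$. Since the latter is $\NPclass$-hard to decide for symmetric rank-one $\cA$ by Theorem~\ref{thm:dec}, so is the former.

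The easy direction of this equivalence is essentially immediate: given $p \in \Delta_n$ with $\lyap < 0$, the i.i.d.\ process with marginal $p$ is itself exchangeable, and the induced dynamical system converges a.s.\ from every initial state by the discussion following~\eqref{eq:def:explyap}.

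For the converse direction, I would apply de Finetti's theorem to an arbitrary exchangeable process $A_{\sigma_1}, A_{\sigma_2}, \ldots$ taking values in the finite set $\cA$: its law is a mixture of i.i.d.\ product distributions on $\cA^{\N}$, indexed by some mixing measure $\nu$ on $\Delta_n$. Conditioning on the mixing parameter $p \sim \nu$ reduces to the i.i.d.\ setting, so by Theorem~\ref{thm:formula} the quantity $\tfrac{1}{k}\log\|A_{\sigma_k}\cdots A_{\sigma_1}\|$ tends a.s.\ to $\lyap$ conditional on $p$, and therefore tends a.s.\ to $\lambda(\cA, P)$ with $P \sim \nu$ unconditionally. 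Requiring a.s.\ convergence of the dynamical system from every initial state forces this limit to be negative a.s., which in turn forces $\nu$ to be supported on $\{p \in \Delta_n : \lyap < 0\}$; in particular that set is nonempty, producing the desired stabilizing i.i.d.\ distribution.

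I do not expect serious technical obstacles. Because $\cA$ is finite, the integrability hypotheses needed to invoke Theorem~\ref{thm:formula} conditional on $p$ are automatic (after discarding the harmless zero-matrix case), and the simplest version of de Finetti for finite-valued sequences suffices --- the Hewitt-Savage extension used elsewhere in Section~\ref{sec:extensions} is unnecessary here. The only mildly delicate point will be noting that a.s.\ convergence of $x_k$ from every initial state is equivalent in finite dimensions to a.s.\ convergence of $\|A_{\sigma_k}\cdots A_{\sigma_1}\|$, which follows from $\|x_k\| \leq \|A_{\sigma_k}\cdots A_{\sigma_1}\|\cdot\|x_0\|$ in one direction and from testing on a finite basis in the other. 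The conceptual content of the argument is the informal intuition already sketched in Section~\ref{sec:intro}: optimizing a linear functional of the law (the Lyapunov exponent) over the convex set of exchangeable laws is achieved at an extreme point, and the extreme points of this set are precisely the i.i.d.\ laws.
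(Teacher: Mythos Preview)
Your proposal is correct and follows essentially the same route as the paper: use de Finetti's theorem to show that a stabilizing exchangeable process exists if and only if a stabilizing i.i.d.\ process exists, then invoke Theorem~\ref{thm:dec}. Your treatment is in fact somewhat more careful than the paper's in that you correctly identify the almost-sure limit as the random variable $\lambda(\cA,P)$ with $P\sim\nu$ (rather than its $\nu$-expectation, as the paper's display~\eqref{eq:extensions:exchangeable} writes), which is precisely what is needed to conclude that $\nu$ must charge the set $\{p:\lyap<0\}$.
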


To give intuition for this result, recall the following three facts: (i) by De Finetti's Theorem, exchangeable processes are mixtures of i.i.d. distributions; (ii) the Lyapunov exponent is a linear functional in this mixing measure (see~\eqref{eq:extensions:exchangeable}); and (iii) every linear program with a compact feasible set admits an optimal extreme point. Combining these three facts implies that, among exchangeable processes, there is an optimal i.i.d. process. This explains the hardness in Theorem~\ref{thm:des-exchangable}, since the design problem over i.i.d. processes is $\NPclass$-hard by Theorem~\ref{thm:dec}. We now formalize this intuition:


\begin{proof}[Proof of Theorem~\ref{thm:des-exchangable}]
By~\eqref{eq:extensions:exchangeable},
there exists such a stabilizing exchangeable process if and only if there exists such a stabilizing i.i.d. process. But deciding whether there exists such a stabilizing i.i.d. process is $\NPclass$-hard by Theorem~\ref{thm:dec}.
\end{proof}

\subsubsection{Markov processes}\label{subsubsec:des:markov}
Here we show that the design problem is significantly easier when optimizing over irreducible Markov processes: there is a polynomial-time algorithm that not only can detect whether the associated dynamical system~\eqref{eq:lds} is stabilizable, but also can compute an irreducible Markov process for which the convergence is fastest. This is in stark contrast to the i.i.d. and exchangeable setups, for which even the stabilization decision problem is $\NPclass$-hard (see Theorems~\ref{thm:dec} and~\ref{thm:des-exchangable}).
\par It will be helpful to introduce some notation. We say a square matrix is \emph{irreducible} if for every simultaneous permutation of its rows and columns, the resulting matrix is a block-diagonal matrix with at most one diagonal block, not counting zero diagonal blocks\footnote{We note that the standard definition of an irreducible matrix does count zero diagonal blocks; this slight tweak allows us to more easily consider optimizing over subsets of $\cA$ without having to introduce additional notation. A similar remark goes for our definition of irreducible Markov transition matrices.}. We say $Q \in \Rpnn$ is an \emph{irreducible Markov transition matrix} on $n$ states if $Q$ is irreducible and $Q\bone = \bone$; let $\cQ_n$ denote the set of all such matrices. Each $Q \in \cQ_n$ has a unique stationary distribution which we will denote by $\pi_Q$. 
\par For simplicity, we assume below an exact arithmetic model. Inexact arithmetic is easily handled by computing logarithms to arbitrary precision.

\begin{theorem}\label{thm:des-markov}
There is a polynomial-time algorithm that, given any collection of $n$ rank-one matrices $\cA = \{u_iv_i^T\}_{i=1}^n$, outputs a minimizer of
\begin{align}
\min_{Q \in \cQ_n} \sum_{i,j=1}^n (\pi_Q)_i Q_{ij} \log |u_i^Tv_j|.
\label{eq:des-markov}
\end{align}
\end{theorem}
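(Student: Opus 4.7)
The plan is to reduce the optimization in~\eqref{eq:des-markov} to the classical \emph{Minimum Mean Cycle} problem on a weighted digraph, and then invoke Karp's algorithm~\citep{Karp78} to solve it in polynomial time.

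First I would set up the weighted digraph. Let $G_{\cA}$ be the complete directed graph on vertex set $[n]$ with edge weights $w_{ij} := \log|u_i^Tv_j| \in \R \cup \{-\infty\}$. The objective in~\eqref{eq:des-markov} then equals $\sum_{i,j} f_{ij}w_{ij}$, where $f_{ij} := (\pi_Q)_i Q_{ij}$ is the stationary edge-frequency induced by $Q$. Since $\pi_Q Q = \pi_Q$, the flow $f$ satisfies conservation $\sum_j f_{ij} = \sum_j f_{ji}$ at every node $i$, and it is a probability distribution on the edges.

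The central step is to prove the reduction
\begin{align*}
\min_{Q \in \cQ_n} \sum_{i,j} (\pi_Q)_i Q_{ij} \log|u_i^Tv_j| \;=\; \min_{C \text{ directed cycle in } G_{\cA}} \frac{1}{|C|}\sum_{e \in C} w_e.
\end{align*}
For the $(\leq)$ direction, given a cycle $C = (i_1 \to \cdots \to i_k \to i_1)$, construct $Q^* \in \cQ_n$ by letting $Q^*_{i_t,i_{t+1}} = 1$ on $C$ and sending every state $j \notin C$ deterministically into some fixed node of $C$. Then $C$ is the unique closed communication class (so $Q^*$ is irreducible per the paper's definition), $\pi_{Q^*}$ is uniform on $C$ and vanishes on transient states, and the objective evaluates to $\tfrac{1}{k}\sum_{e \in C} w_e$. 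For the $(\geq)$ direction, I would invoke standard flow decomposition on the circulation $f$: write $f = \sum_C \alpha_C \bone_C$ with $\alpha_C \geq 0$; since $\sum_{i,j} f_{ij} = 1$, we have $\sum_C \alpha_C |C| = 1$, and the objective becomes $\sum_C \alpha_C |C| \cdot \bar{w}(C) \geq \min_C \bar{w}(C)$.

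With the reduction in hand, the algorithm is immediate: after handling the $-\infty$ case separately (if any edge $(i,j)$ with $u_i^Tv_j = 0$ lies on some directed cycle, detected in $O(n^3)$ time by reachability, then the infimum is $-\infty$ and an optimal $Q^*$ is supported on such a cycle), run Karp's algorithm on the finite-weight subgraph of $G_{\cA}$ to find a min-mean cycle $C^*$ in $O(|V|\cdot |E|) = O(n^3)$ time, and output the $Q^*$ built from $C^*$ as above. The main obstacle will be the lower-bound direction of Step~2: one must carefully justify that the flow decomposition uses only \emph{cycles in $G_{\cA}$} (not just closed walks) and that the paper's mildly non-standard notion of irreducibility — which allows transient states outside $\supp(\pi_Q)$ — does not change the answer, since transient states contribute zero to the objective through the factor $(\pi_Q)_i = 0$. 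All other ingredients are classical.
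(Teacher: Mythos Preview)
Your proposal is correct and follows essentially the same approach as the paper: reduce~\eqref{eq:des-markov} to the Minimum-Cycle-Mean problem on $G_{\cA}$ via the circulation $f_{ij}=(\pi_Q)_iQ_{ij}$ and its cycle decomposition, then invoke Karp's algorithm, with the optimal $Q$ rebuilt from the optimal cycle exactly as you describe. The paper merely factors this through an explicit intermediate lemma identifying the Markov optimum with a linear program over normalized circulations before passing to cycles, and it does not separately treat the $-\infty$ case you flag; otherwise the arguments coincide.
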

Note that the objective function in~\eqref{eq:des-markov} is precisely the convergence rate of the corresponding dynamical system~\eqref{eq:lds} by the discussion in Subsection~\ref{subsubsec:extensions:markov}. It therefore follows immediately that stability of the dynamical system can be detected in polynomial time: simply run the algorithm in Theorem~\ref{thm:des-markov} and check whether the optimization problem~\eqref{eq:des-markov} has strictly negative value.

\begin{cor}\label{cor:des-exchangeable}
There is a polynomial-time algorithm that, given a collection $\cA$ of $n$ rank-one matrices, decides whether there exists an irreducible Markov process $A_{\sigma_1}, A_{\sigma_2}, \dots$ taking values in $\cA$ such that the stochastic linear dynamical system~\eqref{eq:lds} converges (a.s.) for every initial state.
\end{cor}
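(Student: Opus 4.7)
The plan is to reduce immediately to Theorem~\ref{thm:des-markov}. I would invoke that theorem to compute, in polynomial time, a minimizer $Q^*$ of~\eqref{eq:des-markov}, and then evaluate its optimal value
\[
v^* \;:=\; \sum_{i,j=1}^n (\pi_{Q^*})_i\, Q^*_{ij}\, \log |u_i^T v_j|.
\]
By the ergodic formula developed in Subsection~\ref{subsubsec:extensions:markov} (Theorem~\ref{thm:stoc} applied to a stationary irreducible Markov chain), $v^*$ is precisely the Lyapunov exponent of the stochastic linear dynamical system~\eqref{eq:lds} driven by the Markov kernel $Q^*$.

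The decision procedure is then to output ``yes'' iff $v^* < 0$. For sufficiency, if $v^* < 0$ then by Theorem~\ref{thm:stoc} applied to the stationary chain with kernel $Q^*$, $\tfrac{1}{k}\log\|A_{\sigma_k}\cdots A_{\sigma_1}\| \to v^* < 0$ almost surely; the sub-multiplicative bound $\|x_k\| \leq \|A_{\sigma_k}\cdots A_{\sigma_1}\|\cdot\|x_0\|$ then forces $x_k \to 0$ a.s.\ for every initial state $x_0 \in \R^d$. For necessity, if some irreducible Markov kernel $Q$ over $\cA$ stabilizes~\eqref{eq:lds} (for every $x_0$), then its Lyapunov exponent $\sum_{i,j}(\pi_Q)_i Q_{ij}\log|u_i^T v_j|$ must be strictly negative -- otherwise, for $x_0$ not orthogonal to any of the $v_i$'s, Theorem~\ref{thm:stoc} would preclude a.s.\ convergence -- and hence $v^* < 0$ by optimality of $Q^*$.

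Thus there is essentially no new substantive step beyond Theorem~\ref{thm:des-markov}: the algorithm simply computes the optimum and tests its sign, which is well-defined in the exact arithmetic model already assumed for Theorem~\ref{thm:des-markov}. The only mildly subtle point is that ``every initial state'' is handled asymmetrically in the two directions of the equivalence -- via the uniform spectral-norm bound in the sufficiency direction, and via a single generic $x_0$ in the necessity direction -- but neither direction requires any extra work.
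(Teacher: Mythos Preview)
Your proposal is correct and is essentially the paper's own approach: the paper states that the corollary ``follows immediately'' from Theorem~\ref{thm:des-markov} by running that algorithm and checking whether the optimal value of~\eqref{eq:des-markov} is strictly negative. You have simply fleshed out the correctness argument (sufficiency via the norm bound, necessity via a generic $x_0$ in the rank-one factorization), which the paper leaves implicit by appealing to the Furstenberg--Kesten-type equivalence between $\lambda<0$ and a.s.\ convergence stated in the introduction.
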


We now turn to the proof of Theorem~\ref{thm:des-markov}. We proceed in two steps, informally sketched as follows. First, we observe that the optimization problem~\eqref{eq:des-markov} can be equivalently recast as a linear program over circulations (Lemma~\ref{lem:markov-circ}). Then, by a basic result about decomposing circulations into cycles, we observe that this linear program is equivalent to the Minimum-Cycle-Mean problem in a certain associated graph (Lemma~\ref{lem:markov-ext}). The proof is then completed by recalling a well-known polynomial-time algorithm for the Minimum-Cycle-Mean problem.
\par We now formalize step $1$. Below, we say $F \in \Rpnn$ is a (normalized) \emph{circulation} of size $n$ if $F \bone = F^T\bone$ and $\sum_{ij}F_{ij} = 1$; let $\cF_n$ denote the set of all such matrices.

\begin{lemma}\label{lem:markov-circ}
$
\min_{Q \in \cQ_n} 
\sum_{ij} (\pi_Q)_i Q_{ij} \log |u_i^Tv_j|
=
\min_{F \in \cF_n} 
\sum_{ij} F_{ij} \log |u_i^Tv_j|
$.
\end{lemma}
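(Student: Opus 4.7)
The plan is to establish the identity through a value-preserving correspondence between $\cQ_n$ (equipped with its stationary distributions) and $\cF_n$, proving each inequality separately.

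For the direction $\min_{F\in\cF_n}(\cdots) \leq \min_{Q\in\cQ_n}(\cdots)$, given any $Q \in \cQ_n$ with stationary distribution $\pi_Q$, I would set $F_{ij} \defeq (\pi_Q)_i Q_{ij}$. It is immediate that $F \geq 0$, that the row sums of $F$ equal $\pi_Q$, that the column sums equal $Q^T \pi_Q = \pi_Q$ by stationarity, and that the total mass $\sum_{ij} F_{ij}$ equals $\sum_i (\pi_Q)_i = 1$. Hence $F \in \cF_n$, and the two objective functionals agree termwise.

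For the reverse direction $\min_{Q\in\cQ_n}(\cdots) \leq \min_{F\in\cF_n}(\cdots)$, given $F \in \cF_n$ I would set $\pi \defeq F\bone = F^T\bone$, let $S \defeq \supp(\pi)$, and define $Q_{ij} \defeq F_{ij}/\pi_i$ for $i \in S$. The key observation is that for $j \notin S$ the identity $0 = \pi_j = \sum_i F_{ij}$ forces $F_{ij} = 0$ for all $i$; hence for $i \in S$ the row $Q_i$ is supported in $S$ and sums to $\pi_i/\pi_i = 1$. For rows $i \notin S$ I would assign $Q_{ij}$ arbitrarily (say, $Q_{ij_0} = 1$ for a single fixed $j_0 \in S$). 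The resulting $Q$ has stationary distribution $\pi$, and the two objectives again agree since $(\pi_Q)_i Q_{ij} = F_{ij}$ for $i \in S$ while both sides vanish for $i \notin S$.

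The main subtlety is ensuring the constructed $Q$ respects the paper's slightly nonstandard irreducibility convention on $\cQ_n$, which may fail if $F$ restricted to $S$ itself splits into several strongly connected components. The resolution is a convex decomposition argument: because flow conservation $F\bone = F^T\bone$ forbids edges of $F$'s support graph to cross between distinct SCCs (any such cross-edge would create a source or sink SCC with mismatched total inflow versus outflow), $F$ splits as $F = \sum_k \alpha_k F^{(k)}$ with each $F^{(k)} \in \cF_n$ supported on a single SCC. Linearity of the objective in $F$ then yields an index $k^*$ with $\sum_{ij} F^{(k^*)}_{ij} \log|u_i^Tv_j| \leq \sum_{ij} F_{ij} \log|u_i^Tv_j|$, and applying the construction above to $F^{(k^*)}$ produces a genuinely irreducible $Q \in \cQ_n$ with objective no larger than that of $F$. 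Combining the two directions closes the argument.
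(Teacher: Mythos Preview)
Your proposal is correct and follows essentially the same route as the paper: the easy direction via $F_{ij} = (\pi_Q)_i Q_{ij}$ is identical, and for the reverse direction the paper likewise first reduces to an irreducible $F$ (citing a textbook circulation-decomposition result where you argue directly via SCCs and flow balance) and then builds $Q$ from $F$ by exactly your formula, sending rows with $\pi_i = 0$ to a fixed index in $\supp(\pi)$.
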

\begin{proof}
For any $Q \in \cQ_n$, the matrix $F$ with entries $F_{ij} := (\pi_Q)_i Q_{ij}$ is easily checked to be in $\cF_n$ since $F\bone = F^T\bone = \pi_Q$ by definition of $\pi_Q$ being the stationary distribution for $Q$. This proves the inequality ``$\geq$''. For the other direction ``$\leq$'', consider an optimal $F \in \cF_n$ for the optimization problem on the right hand side. We may assume w.l.o.g. that $F$ is irreducible since a circulation can always be decomposed as a convex combination of irreducible circulations (this follows e.g. from~\citep[Exercise 7.14]{BerTsi97}), and each of these irreducible circulations must also be optimal by linearity of the cost function $F \mapsto \sum_{ij} F_{ij} \log |u_i^Tv_j|$.
Let $\pi := F\bone = F^T\bone$, let $j^*$ be any fixed index in $\{1, \dots, n\}$ such that $\pi_{j^*} > 0$ (such an index exists since $F \in \cF_n$), and let $Q$ be the matrix with entries
\begin{align}
Q_{ij} := \begin{cases}
F_{ij} / \pi_i & \pi_i > 0 \\
1 & \pi_i = 0, j = j^* \\
0 & \pi_i = 0, j \neq j^*
\end{cases}.
\label{eq:lem:markov-circ}
\end{align}
It is straightforward to check that $Q \in \cQ_n$, that it has stationary distribution $\pi_Q = \pi$, and that $(\pi_Q)_i Q_{ij} = F_{ij}$ for all $i,j$. This completes the proof.
\end{proof}

We now formalize step $2$. Below, $G_{\cA}$ denotes the complete directed graph on $n$ nodes, one for each matrix $A_i$, with weight $\log |u_i^Tv_j|$ on the edge from node $i$ to node $j$.  The \emph{mean weight} of a cycle in a $G_{\cA}$ is the sum of the weights of the edges in the cycle, divided by the total number of edges.
The \emph{minimum-cycle-mean} of $G_{\cA}$ is the minimum mean weight of a cycle, where the minimum is over all cycles in $G_{\cA}$.

\begin{lemma}\label{lem:markov-ext}
$\min_{F \in \cF_n} 
\sum_{ij} F_{ij} \log |u_i^Tv_j|$ is equal to the minimum-cycle-mean of $G_{\cA}$.
%
\end{lemma}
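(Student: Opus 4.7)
The plan is to exploit the fact that the objective $F \mapsto \sum_{ij} F_{ij} \log |u_i^T v_j|$ is \emph{linear} in $F$, and that $\cF_n$ is a bounded polytope (it is the intersection of the simplex $\{F \geq 0 : \sum_{ij} F_{ij} = 1\}$ with the affine flow-conservation subspace $\{F : F\bone = F^T\bone\}$). By the fundamental theorem of linear programming, the minimum is attained at an extreme point of $\cF_n$, so it suffices to identify the extreme points and their costs.

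First I would associate to every simple directed cycle $C = (i_1 \to i_2 \to \dots \to i_\ell \to i_1)$ in $G_{\cA}$ the ``normalized cycle circulation'' $F^C \in \cF_n$ defined by $F^C_{i_t, i_{t+1}} = 1/\ell$ for each edge of $C$ (indices mod $\ell$) and $F^C_{ij} = 0$ otherwise. A direct check shows $F^C \bone = (F^C)^T \bone$ and $\sum_{ij} F^C_{ij} = 1$, so $F^C \in \cF_n$, and the cost of $F^C$ equals
\begin{equation*}
\sum_{ij} F^C_{ij} \log |u_i^T v_j| = \frac{1}{\ell} \sum_{t=1}^{\ell} \log |u_{i_t}^T v_{i_{t+1}}|,
\end{equation*}
which is exactly the mean weight of the cycle $C$. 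This already yields the inequality ``$\leq$'' against the minimum-cycle-mean.

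For the reverse inequality, I would invoke the standard flow-decomposition result (essentially the same fact cited in the proof of Lemma~\ref{lem:markov-circ}, see e.g.~\citep[Exercise 7.14]{BerTsi97}): every $F \in \cF_n$ can be written as a finite convex combination $F = \sum_k \lambda_k F^{C_k}$ of normalized simple-cycle circulations $F^{C_k}$. By linearity of the objective, the cost of any $F \in \cF_n$ is therefore a convex combination of the mean weights of the cycles $C_k$, hence at least the minimum mean weight over all cycles of $G_{\cA}$. Combining the two directions gives the claimed equality.

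The only nontrivial ingredient is the circulation-to-cycles decomposition; since this is a classical result (repeatedly peel off a cycle inside the support of $F$, subtract the maximal scalar multiple of its indicator, and iterate until the support is empty), there is no real obstacle here, and the lemma follows cleanly.
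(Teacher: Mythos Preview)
Your proposal is correct and follows essentially the same approach as the paper: both arguments use linearity of the objective together with the classical decomposition of a normalized circulation into a convex combination of normalized cycle circulations (the paper cites the same source, \citep[Exercise 7.14]{BerTsi97}), and both read off the two inequalities from this decomposition in the obvious way. Your write-up is slightly more explicit about the polytope/extreme-point viewpoint, but the content is the same.
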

\begin{proof}
Recall that a circulation (flow) can be decomposed as the convex combination of cycle (flows) see e.g.~\citep[Exercise 7.14]{BerTsi97}. Thus by linearity of the objective function $F \mapsto \sum_{ij} F_{ij} \log |u_i^Tv_j|$, there exists an optimal circulation $F \in \cF_n$ and a cycle $C$ such that $F_{ij}$ is equal to $1/|C|$ if $(i,j)$ is an edge in $C$, and $0$ otherwise. The direction ``$\geq$'' then follows since $\sum_{ij}F_{ij}\log |u_i^Tv_j|$ is equal to the mean weight of $C$. The other direction ``$\leq$'' follows by constructing $F \in \cF_n$ from an optimal cycle $C$ in the same way.
\end{proof}

\par We are now ready to conclude the proof of Theorem~\ref{thm:des-markov}.

\begin{proof}[Proof of Theorem~\ref{thm:des-markov}]
By Lemmas~\ref{lem:markov-circ} and~\ref{lem:markov-ext}, $\min_{Q \in \cQ_n} \sum_{ij} (\pi_Q)_i Q_{ij} \log |u_i^Tv_j|$ is equal to the minimum-cycle-mean of $G_{\cA}$. An optimal such cycle (i.e. one with minimum mean weight) can be computed in $O(n^3)$ time~\citep{Karp78}. Let $F$ denote the $n \times n$ matrix with entries $F_{ij}$ equal to $1/|C|$ if the cycle contains the edge $(i,j)$ and equal to $0$ otherwise. Clearly $F \in \cF_n$ and $\sum_{ij} F_{ij} \log |u_i^Tv_j|$ is equal to the mean weight of $C$. Now construct $Q \in \cQ_n$ from $F$ by~\eqref{eq:lem:markov-circ} so that $\sum_{ij} F_{ij} \log |u_i^Tv_j| = \sum_{ij} (\pi_Q)_i Q_{ij} \log |u_i^Tv_j|$. Thus $Q$ is an optimal solution for~\eqref{eq:des-markov}. Since $F$ and $Q$ are clearly computable from $C$ in polynomial time, this completes the proof.
\end{proof}


\begin{remark}
It is known that the Joint Spectral Radius of a collection of rank-one matrices can be computed from the minimum-cycle-mean of $G_{\cA}$~\citep[Theorem 2.2]{AhmPar12}. We have shown here that the analogous optimization problem over irreducible Markov processes is also equivalent to the minimum-cycle-mean of $G_{\cA}$.
\end{remark}

\bibliographystyle{abbrv}
\bibliography{lyap_rank1_bib}{}

\end{document}